\newtheorem{cor}{Corollary}
\newtheorem{lemma}{Lemma}[section]
\newtheorem{Add in proof}{Add in proof}[section]
\newtheorem{remark}{Remark}
\newtheorem{theorem}{Theorem}
\newcommand{\R}{\ensuremath{\mathbb{R}}}
\def \p{\partial}
\def\intave#1{-\kern-10.7pt\int_{\,#1}}
\def\<{\langle}   \def\>{\rangle}
\def\({\left(}    \def\){\right)}
\def\limsup{\operatornamewithlimits{lim\,sup}}
\let\d\relax\DeclareMathOperator{\d}{d}
\renewcommand{\d}{\,d}
\DeclareMathOperator{\curl}{curl}
\DeclareMathOperator{\tr}{tr}
\newcommand{\sign}[1]{\ensuremath\mathrm{sign}(#1)}
\let\div\relax \DeclareMathOperator{\div}{div}
\newcommand{\na}{\ensuremath{\nabla}}
\newcommand\alabel[1]{\addtocounter{equation}{1}\tag{\theequation}\label{#1}}
\begin{document}
\title{A new representation for the Landau-de Gennes energy of nematic liquid crystals}

\numberwithin{equation}{section}
\author{Zhewen Feng and Min-Chun Hong}

\address{Zhewen Feng, Department of Mathematics, The University of Queensland\\
	Brisbane, QLD 4072, Australia}
\email{z.feng@uq.edu.au}

\address{Min-Chun Hong, Department of Mathematics, The University of Queensland\\
Brisbane, QLD 4072, Australia}
\email{hong@maths.uq.edu.au}

\begin{abstract} In the Landau-de Gennes  theory on nematic liquid crystals, the well-known Landau-de Gennes energy depends on  four  elastic constants; $L_1$, $L_2$, $L_3$, $L_4$. For the general case of $L_4\neq 0$, Ball-Majumdar \cite {BM} found an  example that  the   Landau-de Gennes energy functional from physics literature \cite{MN} does not  satisfy a  coercivity condition, which causes a  problem in mathematics to establish  existence of energy minimizers. In order to solve this problem, we observe that the original third order  term  on  $L_4$, proposed by Schiele and Trimper  \cite{ST} in physics, is a linear combination of a fourth order term and a second order term. Therefore, we can propose a new Landau-de Gennes energy, which is equal to the  original for uniaxial nematic $Q$-tensors. The new  Landau-de Gennes energy with  general elastic constants satisfies the coercivity condition for all $Q$-tensors, which establishes a new link between mathematical and physical theory. Similarly to the work of Majumdar-Zarnescu  \cite{MZ}, we prove existence and convergence of minimizers of the new Landau-de Gennes energy. Moreover, we find a new way to study the limiting problem of the Landau-de Gennes system since the cross product method \cite{Chen} on the Ginzburg-Landau equation does not work for the Landau-de Gennes system.
     \end{abstract}
\subjclass[2010]{35J20,35Q35,76A15} \keywords{The Landau-de Gennes energy; Coercivity conditions; $Q$-tensors}

 \maketitle

\pagestyle{myheadings} \markright {A new representation for the Landau-de Gennes energy}

\section{Introduction}

    Liquid crystal is a state of matter between isotropic liquid and crystalline solid.
    Based on molecular positional and orientational orders,  there are three main phases: sematic, cholesterics and  nematic \cite{Mi}*{p.~578}.  The nematic phase is the most common type in  which the general states are  the  uniaxial and   biaxial state.
    Due to the anisotropic microstructure, some physical properties such as light polarization, of substances will change under external influence. It is best known for the use in liquid crystal displays.

 In their  pioneering works, Oseen \cite{Os}  and Frank \cite{Fr}  discovered  the first mathematical  continuum theory of uniaxial nematic liquid crystals through a vector representation. Let $\Omega$ be a domain in $\R^3$. For a unit director $u\in W^{1,2} (\Omega; S^2)$, the  Oseen-Frank energy density  is given by
		 \begin{align}\label{OF density}
			W(u,\na u) =& \frac {k_1} 2(\div u)^2+ \frac {k_2 } 2(u\cdot \curl u)^2+\frac {k_3} 2|u\times \curl u|^2\\
&+\frac {k_2+k_4 } 2(\tr(\na u)^2-(\div u)^2),\nonumber
		\end{align}
where $k_1,k_2,k_3$ are the Frank constants for molecular distortion  of splay, twist and bend  respectively and $k_4$ is the Frank constant for the surface energy.

The Oseen-Frank energy, which can only account  for  uniaxial phases, is one of the successful theories for  modelling nematic liquid crystals in physics \cite{Ste}.
It is also of great interest to study the biaxial phase.
In 1970,  Freiser  \cite {Fre} hypothesized  a rare substance having  a biaxial phase, which was later discovered  by Madsen et al. \cite{MDNS} in 2004. To  study  the phenomenon of phase transitions,  de Gennes \cite{De} in 1971 discovered a matrix representation, known as the $Q$-tensor order parameter,   and the first  expression of the elastic energy of this $Q$-tensor  with  the Landau theory  \cite{SV12}*{p.~208}.  Presently,   the Landau-de Gennes theory  is well-known  for capturing the phase transitions and biaxial state of liquid crystals.
     The  Landau-de Gennes theory  has been verified  in physics  as one of the successful theories for  modelling the nematic liquid crystals. Indeed,
   Pierre-Gilles de Gennes was awarded  a Nobel prize for physics in 1991  ``{\it for discovering that methods developed for studying order phenomena in simple systems can be generalized to more complex forms of matter, in particular to liquid crystals and polymers}''.

 In the Landau-de Gennes framework,  a symmetric, traceless $3\times3$ matrix $Q\in\mathbb M^{3\times3}$ is  known as the $Q$-tensor order parameter, where  $\mathbb M^{3\times 3}$ denotes the space of $3\times 3$ matrices.  The space of symmetric, traceless  $Q$-tensors is defined by
		\begin{equation}\label{tensor space}
		S_0:=\left\{Q\in \mathbb M^{3\times 3}:\quad Q^T=Q, \, \mbox{tr }Q  =0\right\}.
		\end{equation}
For a tensor $Q\in W^{1,2}(\Omega; S_0)$, its Landau-de Gennes energy is defined by
\begin{equation*}
E_{LG}(Q; \Omega)=\int_{\Omega}( f_E + f_B)\,dx,
\end{equation*}
where $f_E$ is the elastic energy density  with elastic constants $L_1,...,L_4$ of the form
	\begin{equation}\label{LG}
f_E(Q,\nabla Q):=\frac{L_1}{2}|\nabla  Q|^2+\frac{L_2}{2} \frac{\p Q_{ij}}{\p x_j} \frac{\p Q_{ik}}{\p x_k} +\frac{L_3}{2}\frac{\p Q_{ik}}{\p x_j}\frac{\p Q_{ij}}{\p x_k}+\frac{L_4}{2}Q_{lk}\frac{\p Q_{ij}}{\p x_l}\frac{\p Q_{ij}}{\p x_k}
\end{equation}
 and $f_B(Q)$ is  a bulk  energy density with three positive constant $a$, $b$, $c$ defined by
\begin{equation}\label{BE}	
f_B(Q):=-\frac{ a}{2}\tr( Q^2)-\frac{ b}{3}\tr( Q^3)+\frac{ c}{4}\left[\tr( Q^2)\right]^2.
\end{equation}
Here and in the sequel, we adopt  the Einstein summation convention for repeated indices.

For a tensor $Q\in W^{1,2}(\Omega; S_0)$, de Gennes \cite{De} first discovered a two-term expression of the elastic energy density in (\ref{LG})
\[\frac{L_1}{2}|\nabla  Q|^2+\frac{L_2}{2}\frac{\p Q_{ij}}{\p x_j} \frac{\p Q_{ik}}{\p x_k}.\]
In 1983, Schiele and Trimper  \cite{ST}*{p.~268} revealed that the
early attempt of de Gennes's work \cite{De} was incomplete since the connection
with the Oseen-Frank density in (\ref {OF density}) would require the splay and bend Frank constants to be equal (i.e. $k_1=k_3$), but, some experiments on liquid crystals showed that $k_3>k_1$, so they extended the original de Gennes representation to one with a third order  term   involving an elastic constant $L_4$:
    \[ \frac{L_1}{2}|\nabla  Q|^2+\frac{L_2}{2} \frac{\p Q_{ij}}{\p x_j} \frac{\p Q_{ik}}{\p x_k} +\frac{L_4}{2} Q_{lk}\frac{\p Q_{ij}}{\p x_l}\frac{\p Q_{ij}}{\p x_k}.\]
    In 1984, Berreman and Meiboom \cite{BM84} observed that above two groups
     discarded the surface energy density in the Oseen-Frank density,
     which correlates the blue phase theory for liquid crystals, so
     they proposed to recover a second order term in $Q$ with four third order terms.
     In 1987, Longa et al. \cite{LMT} gave a extension of Landau-de Gennes density with 22 independent parameters, but
    it is very complicated.
   Later,  Mori et al. \cite{MGKB} in 1999 addressed
 that Dickmann in his PhD thesis \cite{Di} derived a  four
 independent parameters Landau-de Gennes density (\ref{LG}), which is consistent with
 the Oseen-Frank density in (\ref {OF density}).
  Since then, the general form (\ref{LG}) of the Landau-de Gennes representation  has been widely accepted for modelling liquid crystals  (e.g.  \cite {MGKB},  \cite {MN},  \cite {Ba}, \cite  {BZ}).

 From a mathematical point of view, a general form of the tensor $Q\in S_0$ can be written as
	\[Q:=s(u\otimes u-\frac 13 I)+r(w\otimes w-\frac 13 I),\quad u,w\in S^2, \quad s,r\in \R.\]
	Here $u,w$ are two independent direction fields for biaxial liquid crystals and $I$ is the identity matrix.
When the tensor $Q$ has two equal non-zero eigenvalues, a nematic liquid crystal is said to be uniaxial. When $Q$ has two unequal non-zero eigenvalues, a nematic liquid crystal is said to be  biaxial.  For material constants $a , b, c> 0$,  we define the constant order parameter
	\[s_+:=\frac{b+\sqrt{b^2+24ac}}{4c}.\]
We define a subspace
	\[S_*:=\left\{Q  \in S_0:\quad Q =s_+ (u\otimes u-\frac 13 I),\quad  u\in S^2 \right\}.\]	
It is well-known (e.g. \cite {MN}) that $Q\in S_*$ if only if $ \tilde f_B(Q):=f_B(Q)-\inf_{S_0} f_B =0$.	

 	Although there are  many differences between the Oseen-Frank theory and the Landau-de Genes theory, it is of great interest in mathematics and physics whether the Oseen-Frank system can be approximated by the Landau-de Genes system \cite{NZ}.  As it was pointed out in \cite  {MGKB}, Dickmann
 discovered that    for  an  uniaxial  phase  $Q =s(u\otimes u-\frac 13 I)$, the elastic energy density $f_E(Q,\nabla Q)$ in (\ref{LG}) is equal  to the  Oseen-Frank energy density $W(u,\na u)$.
 For the case of uniaxial phase, both  the Oseen-Frank theory and the Landau-de Gennes theory unify in physics modelling.
 In mathematics literature, most  research   focus on the study of the one-constant approximation \cite{Ba};  i.e.  the  elastic parameters satisfy $L_2=L_3=L_4=0$ in (\ref{LG}). Then the density $f_E(Q,\na Q) =\frac {L_1} 2 |\nabla Q|^2$.
In this case, the  Landau-de Gennes energy of $Q\in W^{1,2}(\Omega ; S_0)$ is simplified by
 \begin{equation}\label{SLG}
 E_{SLG}(Q; \Omega)=\int_{\Omega}\left ( \frac  {L_1} 2  |\nabla Q|^2+   f_B(Q)\right )\,dx.
 \end{equation}
Define $W^{1,2}_{Q_0}(\Omega ; S_0)$ be the space $W^{1,2}(\Omega ; S_0)$ with boundary condition $Q_0\in W^{1,2}(\p \Omega ; S_*)$, there is a minimizer of $E_{SLG}$ in $W^{1,2}_{Q_0}(\Omega ; S_0)$, which satisfies the Euler-Lagrange equation
		 \begin{equation}\label{SEL}
		 \Delta Q_{ij}=\frac 1 {L_1} \(-aQ_{ij}-b\Big(Q_{ik}Q_{kj}-\frac{\delta_{ij}}{3}\tr(Q^2)\Big)+cQ_{ij}\tr(Q^2)\).
		 \end{equation}
Majumdar-Zarnescu \cite {MZ} proved that as $L_1\to 0$, minimizers $Q_{L_1}$ of $E_{SLG}$ converges to  $Q_*=s_+ (u^*\otimes u^*-\frac 13 I)$,  where $Q_*$ is a minimizer of $E_{SLG}$ in $W^{1,2}_{Q_0}(\Omega ; S_*)$.   Later,
	 Nguyen-Zarnescu \cite {NZ} improved the result that minimizers $Q_{L_1}$ converge smoothly to $Q_*$ except a singular set.

 In theory of liquid crystals, the general expectation on the elastic constants is that $L_1>0,L_2>0,L_3$ and $L_4$ are not always zero (c.f. \cite{ST}*{p.~268}, \cite{BM}). Therefore,  it  is   very important to study whether the limit of solutions to the Landau-de Gennes  system    is a solution to the  Oseen-Frank system for a general case of $L_1, \cdots , L_4$. In 2D,  Bauman, Park and Phillips \cite{BPP}  investigated a limiting result of minimizers of the energy $E_{LG}$  with $L_4=0$ (see also  \cite{GM}).   For $L_4\neq 0$, Iyer, Xu and Zarnescu \cite{IXZ} studied the 2D problem and imposed a small  condition on the supremum of the unknown $Q$  to gain some control on the $L_4$ term. However, the limiting problem is open for the general case with $L_4\neq 0$.

A fundamental  problem in mathematics on the Landau-de Genes theory  is  to establish existence of minimizers of the energy functional $E_{LG}$ in  $W^{1,2}_{Q_0}(\Omega ; S_0)$  for a general case of elastic constants $L_1,\cdots,L_4$.
To prove the  existence of a minimizer of the functional  $E_{LG}(Q,\Omega)$ in $W^{1,2}(\Omega; S_0)$, one must show that the functional  $E_{LG}$ is lower semi-continuous in $W^{1,2}(\Omega; S_0)$.  By  the standard theory of calculus variations (e.g.  \cite{Gi}), it is necessary to establish  that $f_E(Q, \nabla Q)$ is bounded below by $a|\nabla Q|^2-C$ for some $a>0$. Therefore, it is   very important  to study the bound below problem of $f_{E}(Q,\nabla Q)$.
When $L_4=0$,  Longa et al. \cite{LMT} found the stability criteria
\begin{align}\label{L iff}L_1+L_3>0,\,  2L_1-L_3>0,\, L_1+\frac53L_2+\frac16L_3>0.\end{align}
Under this condition, Davis and Gartland \cite{DG} showed that   $f_E$
 satisfies the coercivity condition.   Kitavtsev et al. \cite{KRSZ}  proved that the condition
 \eqref{L iff}
 is also necessary.
   For the case of $L_4\neq0$  in (\ref{LG}), Ball-Majumdar \cite {BM} found an  example that $f_E(Q, \nabla Q)$ is unbounded from below, so one cannot prove existence of a minimizer of the functional  $E_{LG}(Q, \Omega)$ in $W^{1,2}(\Omega; S_0)$. Therefore, the  Dickmann's representation  (\ref{LG}) causes a knowledge gap between mathematics and physics, which is very challenging in mathematics since the energy functional  $E_{LG}$ in $W^{1,2}(\Omega; S_0)$  does not satisfy a coercivity condition and violates the existence  theorem of minimizers  \cite{Ba}. To attain the coercivity for the case of $L_4\neq 0$,
    Mucci and Nicolodi \cite{MN16}   proved that the energy functional satisfied a coercivity condition under some special conditions on the material constants.
   In contrast to the above continuum theory, Ball and Majumdar \cites{BM} suggested a statistical approach from the Maier-Saupe theory and proposed a singular bulk potential instead of the Landau-de Gennes bulk potential to attain the coercivity condition. This new setting has been investigated by many  \cites{EKT,FRSZ,FRSZ15,Wi,WXZ}. A comprehensive review of this statistical approach,  please refer to \cites{Ba,Ga}.

\medskip

To solve  the above coercivity problem,  we observe  in Lemma \ref{Lemma 2.1} that for uniaxial tensors $Q\in S_*$,
the original
third order  term  on  $L_4$ in  (\ref {LG}), proposed by
  Schiele and Trimper  \cite{ST}*{p.~268} in physics, is a linear combination of a fourth order term and a second order term in the following:
 \begin{align*}\alabel{third}
	Q_{lk}\frac{\p Q_{ij}}{\p x_l}\frac{\p Q_{ij}}{\p x_k}=\frac{3}{s_+}(Q_{ln}\frac{\p Q_{ij}}{\p x_l})(Q_{kn}\frac{\p Q_{ij}}{\p x_k})-\frac {2s_+}3|\na Q|^2.
	\end{align*}
We emphasise that
 the third order  term on $L_4$   by  Schiele and    Trimper  \cite{ST}  depends on
the splay and bend  constants
  $k_1, k_3$; i.e. in general, $L_4 =\frac1{2s_+^{3}} (k_3-k_1)$ is not  zero.
Longa et al. \cite{LMT}   mentioned
 that  the third order  term by
  Schiele and    Trimper
 is a linear combination of six third order terms and also suggested that the  Oseen-Frank energy density is a   linear combination
 of  their 22 independent parameters, but they did not give an explicit  form  such that the energy density satisfies  the above coercivity problem. Our  fourth order term in \eqref{third},  derived from the third order  term  of
  Schiele and    Trimper, is a positive square term and  a linear combination
 of  three fourth order terms $L^{(4)}_5, L^{(4)}_6, L_7^{(4)}$ in \cite{LMT}; i.e. we verify in Lemma \ref{Lemma 2.1.b} that
 \[Q_{ln}Q_{kn}\frac{\p Q_{ij}}{\p x_l}\frac{\p Q_{ij}}{\p x_k}=\frac{8}{5} L^{(4)}_5- \frac{2}{5}L^{(4)}_6 +\frac{2}{5}L_7^{(4)}.\]

  Due to  \eqref{third}, for uniaxial tensors $Q\in S_*$, the elastic energy density  $f_E(Q,\nabla Q)$ in (\ref {LG}) is equivalent to the new form
	\begin{align}\label{D}
	f_{E,1}(Q,\nabla Q)=& \(\frac{L_1}{2}-\frac{s_+L_4}{3}\)|\na Q|^2+\frac{L_2}{2}
\frac{\p Q_{ij}}{\p x_j} \frac{\p Q_{ik}}{\p x_k}
\\&+ \frac{L_3}{2} \frac{\p Q_{ik}}{\p x_j}\frac{\p Q_{ij}}{\p x_k}
+ \frac{3L_4}{2s_+}Q_{ln} Q_{kn}\frac{\p Q_{ij}}{\p x_l}\frac{\p Q_{ij}}{\p x_k}.\nonumber\end{align}
Assuming that
  \begin{align}\label{L cond}L_2\geq 0,\, L_4\geq 0,\, L_1- |L_3|-\frac{2s_+}{3}L_4>0,\end{align}    the energy density $f_{E,1}(Q,\nabla Q)$ in \eqref{D} is rewritten as
\begin{align}\label{f a V}
f_{E,1}=&\frac12(L_1-|L_3|-\frac{2s_+}{3}L_4)|\na Q|^2+ V(Q,\nabla Q),
\end{align}
where
\begin{align*}
V(Q,\nabla Q):=&\frac{L_2}{2}\sum^3_{i=1}\(\sum^3_{j=1}\frac{\p Q_{ij}}{\p x_j}\)^2
		+\frac{|L_3|}{4}\sum^3_{i,j,k=1}\(\frac{\p Q_{ik}}{\p x_j}- \mbox{sign} (L_3)\frac{\p Q_{ij}}{\p x_k}\)^2\\
&
		+\frac{3L_4}{2s_+}\sum^3_{i,j,n=1}\(\sum^3_{k=1}Q_{kn} \frac{\p Q_{ij}}{\p x_k}\)^2.
\end{align*}

 By the new form  of $f_{E,1}(Q,\nabla Q)$ in (\ref{D}),  for each $Q\in W^{1,2}(\Omega, S_0)$, we suggest a new Landau-de Gennes  energy functional
\begin{equation}\label{LG1}
 E_{L}(Q; \Omega )=\int_{\Omega}\(  f_{E,1}(Q, \nabla Q) +\frac 1 L f_B(Q)\)\,dx.
 \end{equation}Here $L$ is a parameter to drive all elastic constants to zero \cites{GM,MN,BPP}.
 Then we have
	\begin{theorem}\label{Theorem 1} Assume that $L_2\geq 0$, $L_4\geq 0$ and $L_1-|L_3|-\frac{2s_+}{3}L_4>0$.
For each $L>0$, there exists a minimizer $Q_L$ of the  new Landau-de Gennes  energy  (\ref{LG1}) in $W^{1,2}_{Q_0}(\Omega; S_0 )$ with a given boundary $Q_0\in W^{1,2}(\Omega; S_* )$.
As $L\to 0$, the minimizers $Q_L$ of   $E_L$ in $W^{1,2}_{Q_0}(\Omega; S_0 )$
		converge strongly  to  $Q_*$ in $W^{1,2}_{Q_0}(\Omega; S_0)$, where $Q_*=s_+(u_*\otimes u_*-\frac 1 3 I)$ is a minimizer of  the elastic energy functional
\[E(Q; \Omega)=\int_{\Omega}   f_{E,1}(Q, \nabla Q)\,dx=\int_{\Omega}   f_{E}(Q, \nabla Q)\,dx\] in $W^{1,2}_{Q_0}(\Omega; S_* )$.  Moreover, $Q_*$ is partially regular in $\Omega$.
\end{theorem}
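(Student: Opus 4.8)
The plan is a direct-method argument at fixed $L$, a compactness argument in the spirit of \cite{MZ} as $L\to0$, and a transfer of partial regularity from the Oseen--Frank theory through the uniaxial lift. For existence at fixed $L$ I would first use the decomposition \eqref{f a V}: under $L_2\ge0$, $L_4\ge0$, and $\beta:=L_1-|L_3|-\tfrac{2s_+}{3}L_4>0$ the term $V(Q,\nabla Q)$ is a sum of squares, so $f_{E,1}(Q,\nabla Q)\ge\tfrac{\beta}{2}|\nabla Q|^2$; together with the quartic lower bound for $f_B$ (valid since $c>0$) this makes $E_L$ coercive on $W^{1,2}_{Q_0}(\Omega;S_0)$, so a minimizing sequence $\{Q_n\}$ is bounded in $W^{1,2}$ with $\{|Q_n|\}$ bounded in $L^4$. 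For weak lower semicontinuity I would note, again from \eqref{f a V}, that for each fixed value of $Q$ the integrand $f_{E,1}(Q,\cdot)$ is a nonnegative quadratic form in $\nabla Q$, hence $f_{E,1}$ is a nonnegative Carath\'eodory integrand convex in the gradient; Ioffe's lower semicontinuity theorem then gives $\int_\Omega f_{E,1}(Q,\nabla Q)\le\liminf_n\int_\Omega f_{E,1}(Q_n,\nabla Q_n)$ whenever $Q_n\rightharpoonup Q$ in $W^{1,2}$ (so $Q_n\to Q$ in $L^2$ and a.e.\ by Rellich), while $\int_\Omega f_B(Q_n)\to\int_\Omega f_B(Q)$ follows from a.e.\ convergence and the quartic growth. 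Since $S_0$ is a closed subspace and $W^{1,2}_{Q_0}$ is weakly closed, the weak limit of a minimizing sequence is the desired minimizer $Q_L$.

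For the limit $L\to0$ I would put $\tilde f_B:=f_B-\inf_{S_0}f_B\ge0$, so that minimizing $E_L$ amounts to minimizing $\tilde E_L(Q):=\int_\Omega f_{E,1}(Q,\nabla Q)+\tfrac1L\tilde f_B(Q)\,dx$. Since $Q_0\in W^{1,2}(\Omega;S_*)$ gives $\tilde f_B(Q_0)=0$ a.e., testing with $Q_0$ yields $\tilde E_L(Q_L)\le\tilde E_L(Q_0)=\int_\Omega f_{E,1}(Q_0,\nabla Q_0)=:C_0$ uniformly in $L$; hence $\int_\Omega|\nabla Q_L|^2\le2C_0/\beta$, $\{|Q_L|\}$ is bounded in $L^4$, and $\int_\Omega\tilde f_B(Q_L)\le C_0L\to0$. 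Passing to a subsequence $L_k\to0$ with $Q_{L_k}\rightharpoonup Q_*$ in $W^{1,2}$, strongly in $L^2$ and a.e., Fatou gives $\int_\Omega\tilde f_B(Q_*)\le\liminf_k\int_\Omega\tilde f_B(Q_{L_k})=0$, so $Q_*(x)\in S_*$ a.e., and weak continuity of traces gives $Q_*\in W^{1,2}_{Q_0}(\Omega;S_*)$. For any competitor $P\in W^{1,2}_{Q_0}(\Omega;S_*)$ one has $\tilde f_B(P)=0$, so $\int_\Omega f_{E,1}(Q_{L_k},\nabla Q_{L_k})\le\tilde E_{L_k}(Q_{L_k})\le\tilde E_{L_k}(P)=E(P)$, and letting $k\to\infty$ with the Ioffe lower semicontinuity above gives $E(Q_*)\le E(P)$; thus $Q_*$ minimizes $E$ over $W^{1,2}_{Q_0}(\Omega;S_*)$ (in particular such a minimizer exists). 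Taking $P=Q_*$ forces $\int_\Omega f_{E,1}(Q_{L_k},\nabla Q_{L_k})\to\int_\Omega f_{E,1}(Q_*,\nabla Q_*)$; splitting this as $\tfrac{\beta}{2}\int_\Omega|\nabla Q|^2+\int_\Omega V(Q,\nabla Q)$ and applying lower semicontinuity to each nonnegative piece separately forces $\int_\Omega|\nabla Q_{L_k}|^2\to\int_\Omega|\nabla Q_*|^2$, which together with the weak $L^2$ convergence of the gradients upgrades $Q_{L_k}\to Q_*$ to strong convergence in $W^{1,2}_{Q_0}(\Omega;S_0)$.

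For the structure and partial regularity of $Q_*$ I would use that on $S_*$ the relation $Q^2=\tfrac{s_+}{3}Q+\tfrac{2s_+^2}{9}I$ yields the pointwise identity $f_{E,1}(Q,\nabla Q)=f_E(Q,\nabla Q)$ behind \eqref{D}, so $E(Q_*)=\int_\Omega f_E(Q_*,\nabla Q_*)$. Lifting $Q_*=s_+(u_*\otimes u_*-\tfrac13 I)$ with $u_*\in W^{1,2}(\Omega;S^2)$ (orientability of $W^{1,2}$ maps into $S_*$ on a simply connected $\Omega$, or locally) and recalling that $f_E(Q,\nabla Q)$ equals the Oseen--Frank density $W(u,\nabla u)$ on uniaxial tensors, minimality of $Q_*$ among $S_*$-valued maps with datum $Q_0$ is equivalent to minimality of $u_*$ for $\int_\Omega W(u,\nabla u)\,dx$ with the corresponding $S^2$-valued datum. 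Since $|\nabla Q|^2=2s_+^2|\nabla u|^2$ for $Q=s_+(u\otimes u-\tfrac13 I)$, one has $W(u,\nabla u)=f_{E,1}(Q,\nabla Q)\ge\tfrac{\beta}{2}|\nabla Q|^2=\beta s_+^2|\nabla u|^2$, so the Frank constants associated with $L_1,\dots,L_4$ meet the ellipticity hypotheses of the Hardt--Kinderlehrer--Lin partial regularity theorem; hence $u_*$, and therefore $Q_*=s_+(u_*\otimes u_*-\tfrac13 I)$, is smooth outside a relatively closed subset of $\Omega$ of measure zero.

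I expect the essential point to be the two uses of lower semicontinuity for the $Q$-dependent elastic density: it is exactly the new representation \eqref{f a V} --- $\tfrac{\beta}{2}|\nabla Q|^2$ plus an explicit sum of squares --- that makes $f_{E,1}$ both coercive and convex in $\nabla Q$, hence weakly lower semicontinuous, and that allows the elastic energy to be split in the strong-convergence step; for the original density \eqref{LG} the $L_4$ term ruins both. The remaining obstacle is bookkeeping: checking that $L_2\ge0$, $L_4\ge0$, $L_1-|L_3|-\tfrac{2s_+}{3}L_4>0$ really do place the limiting Oseen--Frank problem within the range covered by the Hardt--Kinderlehrer--Lin theorem.
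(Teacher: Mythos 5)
Your proposal is correct and follows essentially the same route as the paper: coercivity and weak lower semicontinuity from the decomposition \eqref{f a V}, uniform bounds from testing against the $S_*$-valued competitor $Q_0$, identification of the weak limit as an $S_*$-valued minimizer of $E$, upgrade to strong $W^{1,2}$ convergence by applying lower semicontinuity to the two nonnegative pieces $\tfrac{\beta}{2}|\nabla Q|^2$ and $V(Q,\nabla Q)$ separately, and transfer of partial regularity via the uniaxial lift and Hardt--Kinderlehrer--Lin. You are merely somewhat more explicit than the paper about the justification of weak lower semicontinuity for the $Q$-dependent term $V$ (citing Ioffe) and about the orientability/local-lifting issue in passing from $Q_*\in W^{1,2}(\Omega;S_*)$ to $u_*\in W^{1,2}(\Omega;S^2)$.
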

\begin{remark}When $L_4\geq 0$, using the  result of  Kitavtsev et al. \cite{KRSZ}, $f_{E,1}$  in (\ref{D}) satisfies a coercivity condition  if and only if the constants $L_1,\cdots, L_4$ satisfy the following:
\begin{align}\label{L}
&  L_1+L_3-\frac{s_+}{6}L_4>0,\quad  2L_1-L_3-\frac{s_+}{3}L_4>0,\\
&L_1+\frac53L_2+\frac16L_3-\frac{s_+}{6}L_4>0. \nonumber
\end{align}
Theorem \ref {Theorem 1} holds if $L_1,\cdots, L_4$ satisfy \eqref{L}.
\end{remark}

In Lemma  \ref{Lemma 2.2},  we prove that a minimizer $Q_*$ of $E(Q; \Omega)$ in $W^{1,2}_{Q_0}(\Omega; S_* )$
  satisfies the following Euler-Lagrange equation

		\begin{align*}\alabel{EL}
		&\bar \alpha \(-s_+ \Delta Q_{ij} +2 \na_kQ_{il}\na_kQ_{jl} -2 s_+^{-1}(Q_{ij}+\frac {s_+}3\delta_{ij})|\na Q|^2\)\\
		&-\nabla_k \(
		(Q_{jl}+\frac {s_+}3 \delta_{jl})V_{p^k_{il}}
		+(Q_{il}+\frac {s_+}3 \delta_{il})V_{p^k_{jl}}
		-2s_+^{-1} (Q_{ij}+\frac {s_+}3 \delta_{ij})(Q_{lm}+\frac {s_+}3 \delta_{lm})V_{p^k_{lm}} \)
		\\
		& + V_{p^k_{il}}\na_kQ_{jl}
		+ V_{p^k_{jl}}\na_k Q_{il}
		-2s_+^{-1}V_{p^k_{lm}}\( ( Q_{ij}+\frac {s_+}3 \delta_{ij})\nabla_k Q_{lm}+( Q_{lm}+\frac {s_+}3 \delta_{lm})\nabla_k Q_{ij}\)
		\\
		&+V_{Q_{il}}(Q_{jl}+\frac {s_+}3 \delta_{jl})
		+V_{Q_{jl}}(Q_{il}+\frac {s_+}3 \delta_{il})
		-2s_+^{-1}V_{Q_{lm}}(Q_{lm}+\frac {s_+}3 \delta_{lm}) (Q_{ij}+\frac {s_+}3 \delta_{ij}) =0
		\end{align*}
in the weak sense for $\bar \alpha :=L_1-|L_3|-\frac{2s_+}{3}L_4>0$.
		 In particular, for the case of  $L_2=L_3=L_4=0$,  \eqref{EL} is simplified to
\[s_+\Delta Q_{ij}-2 \na_kQ_{il}\na_kQ_{jl}+2s_+^{-1}(Q_{ij}+\frac {s_+}3\delta_{ij})|\na Q|^2=0, \]
which is equivalent to the harmonic map equation of $u$. Comparing with the result in \cite {Ho2}, the weak solution of (\ref{EL}) might be not unique.

 \medskip
\begin{remark}
When $L_2=L_3=L_4=0$, Majumdar-Zarnescu  \cite {MZ}  proved a monotonicity
formula for minimizers $Q_L$  of $E_{SLG}(Q; \Omega)$ in $W^{1,2}(\Omega, S_0)$. For the case of $L_4=0$, Contreras and Lamy \cite{CL18} proved uniform convergence of $Q_L$ outside of a singular set. However, in general cases of  $L_4\neq 0$, there is no monotonicity
formula for minimizers $Q_L$ of $E_{LG}(Q; \Omega)$ in $W^{1,2}(\Omega, S_0)$, so it is a very interesting question whether  one can improve the convergence of $Q_L$ for   general cases.
\end{remark}

In Theorem \ref {Theorem 1}, we assume    that $L_4\geq 0$. For general case of $L_4$,  we will obtain a new form of the Landau-de Gennes  energy density through a strong Ericksen's condition on the Oseen-Frank  density.  More precisely, using the condition that
\begin{align}\label{RL}
 &s_+^2L_1 =-\frac 16k_1+\frac 12k_2+\frac 16k_3,&
	s_+^2L_2 =&k_1-k_2-k_4,
	\\
	&s_+^2L_3 =k_4,&
	s_+^{3}L_4 =&-\frac12 k_1+\frac12 k_3,\nonumber
\end{align}	
it was shown in \cite  {MGKB} that for each $Q =s_+ (u\otimes u-\frac 13 I)\in S_*$,
\[W(u,\na u) =f_E(Q,\nabla Q). \]
Recent studies \cites{Ba,HM,FHM} revealed that the strong Ericksen  condition on $k_1, \cdots , k_4$ is required for the Oseen-Frank energy to ensure the existence of  minimizers.
Note that $W(u,\na u)$ in (\ref{OF density}) is quadratic in $\na u$, but the $(k_2+k_4)$ term could be negative, so the coercivity $W(u,\na u)\geq a|\na u|^2$ is unclear.  It was pointed out in \cite{HM} (see also \cite{Er2}) that assuming   the strong Ericksen condition
\begin{equation}\label{Er}	k_2>|k_4|,\quad k_3>0,\quad 2k_1>k_2+k_4,
\end{equation}
there are positive constants $\lambda$ and $C$ such that the density $W(u,\na u)$ is equivalent to a form that  $W(z,p)$  satisfies
\begin{equation*}
 \lambda |p|^2\leq W(z,p)\leq C|p|^2,\quad
 \lambda |\xi|^2\leq W_{p_i^kp_j^l}(z,p) \xi_i^k \xi_j^l\leq C |\xi|^2
\end{equation*}
for any $\xi\in\mathbb M^{3\times 3}$, any $p\in\mathbb M^{3\times 3}$ and any $z\in \R^3$ with $|z|\leq M$ for some constant $M>0$ (see details in Lemma \ref{Lemma 3.1}).

Through the relation (\ref{RL}) between Frank's consists $k_1, \cdots, k_4$ and elastic constants $L_1, \cdots, L_4$, the strong Ericksen condition (\ref {Er})
 is equivalent to a condition  that
 \begin{align}\label{Er1}	
	 L_1-\frac{1}{2}|L_3|& > \frac{s_+}{3}L_4,\quad  L_1+\frac{1 }{2}L_2+\frac{1}{2}L_3+\frac{2s_+}{3}L_4>0,\\&   L_1+ L_2+ \frac 12 L_3 > \frac{s_+}{3}L_4.\nonumber
	\end{align}

 In this paper,
we extend that result in the Oseen-Frank energy  density to the Q-tensor  using the rotational invariant property such that for the condition (\ref {Er1}) on elastic constants $L_1, \cdots , L_4$, we can recover the coercivity condition on the Landau-de Gennes  energy density and establish that:
\begin{theorem}
\label{Theorem 2}
Assume that $L_1$, $L_2$, $L_3$ and $L_4$ satisfy the condition (\ref {Er1}).
 Then for each $Q\in S_*$,    $f_E(Q,\nabla Q)$ is equivalent to a new form
\begin{equation}\label{ND}
  f_{E,2}(Q,\nabla Q):=\frac{ \alpha}{2}|\nabla Q|^2 +V(Q, \nabla Q). \end{equation}
Here $V(Q, \nabla Q)$ is  a sum of square terms as in \eqref{Full W(Q,na Q)} and $\alpha$ is given by
\begin{align*}\alabel{eq alpha}
&\alpha=\min\{2L_1+L_2+L_3-\frac{2s_+}{3}L_4,\, 2L_1-\frac{2s_+}{3}L_4,\,
2L_1+L_2+L_3+\frac{4s_+}{3}L_4\} >0
.\end{align*}

\end{theorem}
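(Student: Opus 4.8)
\emph{Strategy.} The plan is to reduce the statement to the sum--of--squares decomposition of the Oseen--Frank density under the strong Ericksen condition (Lemma \ref{Lemma 3.1}) and then transport it to the $Q$--tensor setting via the substitution $Q=s_+(u\otimes u-\tfrac13 I)$ and the rotational invariance of the two densities. \emph{Step 1 (the algebraic dictionary).} I would first invert \eqref{RL}: eliminating $k_1,\dots,k_4$ gives $k_1=s_+^2(2L_1+L_2+L_3-\tfrac{2s_+}{3}L_4)$, $k_2=s_+^2(2L_1-\tfrac{2s_+}{3}L_4)$, $k_3=s_+^2(2L_1+L_2+L_3+\tfrac{4s_+}{3}L_4)$ and $k_4=s_+^2L_3$, so that the quantity $\alpha$ in \eqref{eq alpha} is precisely $s_+^{-2}\min\{k_1,k_2,k_3\}$. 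A short computation with these identities shows that the strong Ericksen condition \eqref{Er} on $k_1,\dots,k_4$ is equivalent to \eqref{Er1} on $L_1,\dots,L_4$, and that \eqref{Er} forces $k_1,k_2,k_3>0$; hence $\alpha>0$, which proves the last assertion of the theorem.

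\emph{Step 2 (reduction to Oseen--Frank).} For $Q=s_+(u\otimes u-\tfrac13 I)\in S_*$ one has $\partial_kQ_{ij}=s_+(u_i\partial_ku_j+u_j\partial_ku_i)$, and using $u\cdot\partial_ku=0$ together with the uniaxial relation $Q^2=\tfrac{s_+}{3}Q+\tfrac13\tr(Q^2)I$ one derives the pointwise conversions
\[
|\na Q|^2=2s_+^2|\na u|^2,\qquad
\frac{\p Q_{ij}}{\p x_j}\frac{\p Q_{ik}}{\p x_k}=s_+^2\big((\div u)^2+|u\times\curl u|^2\big),
\]
\[
\frac{\p Q_{ik}}{\p x_j}\frac{\p Q_{ij}}{\p x_k}=s_+^2\big(\tr(\na u)^2+|u\times\curl u|^2\big),\qquad
Q_{lk}\frac{\p Q_{ij}}{\p x_l}\frac{\p Q_{ij}}{\p x_k}=2s_+^3\Big(|u\times\curl u|^2-\tfrac13|\na u|^2\Big),
\]
together with the analogous identity for $Q_{ln}Q_{kn}\partial_lQ_{ij}\partial_kQ_{ij}$. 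Substituting these into \eqref{LG} and simplifying with Step 1 recovers the identity $f_E(Q,\na Q)=W(u,\na u)$ of \cite{MGKB}. Thus it suffices to produce the required decomposition for $W$ and read it back through this dictionary. \emph{Step 3 (decomposing $W$).} At a point, fix an orthonormal frame with $u$ as a basis vector; since $|u|\equiv1$ the matrix $\na u$ has a vanishing row, so $(\div u)^2,\ (u\cdot\curl u)^2,\ |u\times\curl u|^2$ are sums of squares of frame components while the saddle--splay term $\tr(\na u)^2-(\div u)^2$ is sign--indefinite. Using the pointwise Ericksen identity $|\na u|^2=(\div u)^2+(u\cdot\curl u)^2+|u\times\curl u|^2+(\tr(\na u)^2-(\div u)^2)$ — and, where convenient, the fact that $\partial_jQ_{ik}\partial_kQ_{ij}-\partial_jQ_{ij}\partial_kQ_{ik}$ integrates to a boundary term — I would eliminate the indefinite term and regroup $W$ as $\tfrac{\alpha}{2}|\na Q|^2+V$, with $V$ a sum of square terms of $\div u$, $u\cdot\curl u$, $u\times\curl u$ and of individual components of $\na u$; the three inequalities of \eqref{Er} are exactly what control the coefficients here and pin the coercive constant to the $\alpha$ of \eqref{eq alpha}.

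\emph{Step 4 (transport and the main obstacle).} Finally I would rewrite each square occurring in $V$ intrinsically in $Q$ and $\na Q$: every such square is a rotational invariant of $(u,\na u)$, so by the (rotationally covariant refinements of the) dictionary of Step 2 it equals a fixed quadratic contraction of $Q$ and $\na Q$ — for instance $(\div u)^2+|u\times\curl u|^2=s_+^{-2}\partial_jQ_{ij}\partial_kQ_{ik}$, and the symmetric frame combinations convert to contractions with $Q$ — giving the expression $V(Q,\na Q)$ of \eqref{Full W(Q,na Q)} and the identity $f_E(Q,\na Q)=\tfrac{\alpha}{2}|\na Q|^2+V(Q,\na Q)$ on $S_*$. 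The hard part is the compatibility between Steps 3 and 4: the sum--of--squares decomposition of $W$ must be chosen so that each summand is a rotational invariant of $(Q,\na Q)$ — an arbitrary decomposition will not do, since the individual frame components of $\na u$ are not intrinsic, only certain symmetric combinations are — while simultaneously keeping the coercive constant as large as $\alpha$. Arranging this grouping, and checking both the equivalence \eqref{Er}$\Leftrightarrow$\eqref{Er1} and the sign of every coefficient in $V$ under \eqref{Er}, is the computational core of the proof.
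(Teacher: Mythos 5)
Your proposal follows the paper's route step for step: it inverts the relation \eqref{RL} to obtain the $k_i$ from the $L_i$, observes that $\alpha = s_+^{-2}\min\{k_1,k_2,k_3\}$ and that the strong Ericksen condition \eqref{Er} (which forces $k_1,k_2,k_3>0$, hence $\alpha>0$) is equivalent to \eqref{Er1}; reduces the claim to the Oseen--Frank setting via $Q=s_+(u\otimes u-\tfrac13 I)$ and the conversion identities for $|\nabla Q|^2$, $\partial_jQ_{ij}\partial_kQ_{ik}$, $\partial_jQ_{ik}\partial_kQ_{ij}$ and the $L_4$-term; invokes the sum-of-squares decomposition of $W(u,\nabla u)$ from Lemma~\ref{Lemma 3.1} (obtained there by rotating to a frame with $\tilde u=(0,0,1)$ and completing squares); and then reads the decomposition back in $Q$-variables. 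You also correctly isolate the real technical issue in the final step: the individual frame components $\tilde\nabla_i\tilde u_j$ appearing in the square terms are not themselves intrinsic in $(Q,\nabla Q)$, so the decomposition must be reassembled from rotationally covariant combinations. That is exactly what the paper's proof is organized around: it fixes the ambiguity $u\leftrightarrow -u$ by \eqref{3.12}, builds a smooth partition of unity subordinate to an open cover of the $Q$-sphere in \eqref{PoU Q}, and expresses each square $I_1,\dots,I_6$ as a fixed contraction of $Q$ and $\nabla Q$ on each chart (formulas \eqref{div u into Q}--\eqref{eq V(Q) form}). Your remark that some terms ``integrate to a boundary term'' is dispensable: the theorem asserts a \emph{pointwise} identity for the density, and both your main argument and the paper's proof achieve this without integration by parts. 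In short, same key lemma, same dictionary, same transport step; the only genuine work you postpone is the partition-of-unity bookkeeping and the sign verification, which you honestly flag as the computational core.

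One caution worth keeping in mind when you carry out the details: the coercive constant $\tilde\alpha=\min\{k_2+k_4,\,2k_1-k_2-k_4,\,k_2-|k_4|,\,k_3\}$ produced by Lemma~\ref{Lemma 3.1} is \emph{not} equal to $\min\{k_1,k_2,k_3\}$ in general, and $|\nabla u|^2 = \tfrac{1}{2s_+^2}|\nabla Q|^2$ introduces an additional factor. So passing from Lemma~\ref{Lemma 3.1} to the coefficient $\tfrac{\alpha}{2}$ of $|\nabla Q|^2$ in \eqref{ND} is not a one-line substitution; it requires regrouping the squares so that the extracted constant becomes $s_+^{-2}\min\{k_1,k_2,k_3\}$ while all the remaining coefficients in $V$ stay nonnegative under \eqref{Er1}. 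This is precisely the bookkeeping you call ``the computational core,'' and it is where the nonnegativity of every coefficient in \eqref{eq V(Q) form} must actually be checked rather than asserted.
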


\begin{cor}
For the case that $\min\{k_1,k_2,k_3\}\geq k_2+k_4=:\tilde \alpha >0$ (c.f. \cite[p.~551]{HKL}, \cite[p.~467]{GMS}),     we know   that
\[W(u,\na u)=\frac {\tilde \alpha }2 |\na u|^2+ V(u,\na u)\] with
	\begin{align*}
  V(u,\na u)
	=\frac {k_1-\tilde \alpha} 2(\div  u)^2 +\frac {k_2-\tilde \alpha} 2(u\cdot \curl u)^2+\frac {k_3-\tilde \alpha }2 |  u\times   \curl  u|^2.
	\end{align*}
Then the explicit  form of $V(Q,\na Q)$ in \eqref{ND} is
 \begin{align*}\alabel{V-Giaq}
		  V(Q,\na Q) =  &(L_1+\frac{L_2}{2}+\frac{L_3}{2}-\frac{s_+}{3}L_4-\frac 12\alpha)\sum^3_{k=1}\left(\sum^3_{i,j=1}(s^{-1}_+Q_{kj}+\frac13\delta_{kj})\nabla_iQ_{ij}\right)^2
		  		\\
		  		&+  (L_1-\frac{s_+}{3}L_4-\frac 12\alpha)\( \sum^3_{i,j=1} (s^{-1}_+Q_{ij}+\frac13\delta_{ij})(\curl Q_j)_i\)^2
		  		\\
		  		&+(L_1+\frac{L_2}{2}+\frac{L_3}{2}+\frac{2s_+}{3}L_4-\frac 12\alpha)\left|\sum^3_{j=1}(s^{-1}_+Q-\frac 13I)_j\times\curl Q_{j}\right|^2,
		\end{align*}
where  $Q_i$  denotes the $i$-th column of the $Q$ matrix,   $\alpha$ is defined in \eqref{eq alpha} and assume that $L_3\leq 0$.
\end{cor}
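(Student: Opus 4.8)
The plan is to transfer the statement to the Oseen--Frank side, where the splitting is classical, and then transport it back to $Q$-variables through the algebraic change of variables $Q=s_+(u\otimes u-\tfrac13 I)$ that underlies the identity $W(u,\nabla u)=f_E(Q,\nabla Q)$ on $S_*$. First I would record, for any $u\in S^2$, the pointwise identity
\[
|\nabla u|^2=(\div u)^2+(u\cdot\curl u)^2+|u\times\curl u|^2+\bigl(\tr(\nabla u)^2-(\div u)^2\bigr),
\]
which follows by combining $|\curl u|^2=|\nabla u|^2-\tr(\nabla u)^2$ with the orthogonal splitting $|\curl u|^2=(u\cdot\curl u)^2+|u\times\curl u|^2$ valid for a unit vector. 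Subtracting $\tfrac{\tilde\alpha}{2}|\nabla u|^2$ from $W(u,\nabla u)$ in \eqref{OF density} and using that $\tilde\alpha=k_2+k_4$ is precisely the coefficient of the null Lagrangian $\tr(\nabla u)^2-(\div u)^2$ makes that term cancel, which gives the identity $W=\tfrac{\tilde\alpha}{2}|\nabla u|^2+V(u,\nabla u)$ with $V(u,\nabla u)$ a genuine sum of squares; nonnegativity of the three coefficients $\tfrac{k_i-\tilde\alpha}{2}$ is exactly the hypothesis $\min\{k_1,k_2,k_3\}\geq\tilde\alpha$ (the classical coercivity splitting used in \cite{HKL} and \cite{GMS}).

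Next I would set up the dictionary between the $u$- and the $Q$-quantities. Differentiating $Q=s_+(u\otimes u-\tfrac13 I)$ gives $\nabla_kQ_{ij}=s_+(u_j\nabla_ku_i+u_i\nabla_ku_j)$, whence, using $|u|=1$ (so that $u\cdot\nabla u=0$), one gets $|\nabla Q|^2=2s_+^2|\nabla u|^2$, $(\div Q)_i=s_+\bigl((\div u)u_i+(u\cdot\nabla)u_i\bigr)$ and $\curl Q_j=s_+\bigl((\curl u)u_j+\nabla u_j\times u\bigr)$. Since $u\otimes u=s_+^{-1}Q+\tfrac13 I$ on $S_*$, contracting $\div Q$ and $\curl Q_j$ against this rank-one projector and invoking the orthogonality relations $u\cdot(u\cdot\nabla)u=0$, $u\perp u\times\curl u$ and $\sum_j u_j\nabla u_j=0$ collapses the mixed terms, so that the three square quantities entering \eqref{V-Giaq} reduce, respectively, to $s_+^2(\div u)^2$, $s_+^2(u\cdot\curl u)^2$ and $s_+^2|u\times\curl u|^2$. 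This carries the three vector square terms of $V(u,\nabla u)$ over to the three $Q$-square terms in the statement.

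It then remains to identify the coefficients, and for this I would use the linear relations \eqref{RL}: a direct substitution gives
$s_+^2\bigl(L_1+\tfrac{L_2}{2}+\tfrac{L_3}{2}-\tfrac{s_+}{3}L_4\bigr)=\tfrac12 k_1$,
$s_+^2\bigl(L_1-\tfrac{s_+}{3}L_4\bigr)=\tfrac12 k_2$,
$s_+^2\bigl(L_1+\tfrac{L_2}{2}+\tfrac{L_3}{2}+\tfrac{2s_+}{3}L_4\bigr)=\tfrac12 k_3$,
and, for $\alpha$ as in \eqref{eq alpha}, $s_+^2\alpha=\min\{k_1,k_2,k_3\}$; moreover $L_3\leq 0$ is equivalent to $k_4\le 0$ (since $k_4=s_+^2L_3$) and fixes the sign of $L_3$, so that $|L_3|=-L_3$ throughout the general sum-of-squares representation of $V(Q,\nabla Q)$ produced in the proof of Theorem \ref{Theorem 2}. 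Feeding these into that representation, together with $f_E(Q,\nabla Q)=W(u,\nabla u)$ on $S_*$, the decomposition \eqref{ND}, and the identity from the first step, pins the three coefficients in \eqref{V-Giaq} down to $L_1+\tfrac{L_2}{2}+\tfrac{L_3}{2}-\tfrac{s_+}{3}L_4-\tfrac12\alpha$, $L_1-\tfrac{s_+}{3}L_4-\tfrac12\alpha$ and $L_1+\tfrac{L_2}{2}+\tfrac{L_3}{2}+\tfrac{2s_+}{3}L_4-\tfrac12\alpha$, each nonnegative under the hypothesis. The step I expect to be the main obstacle is this last bookkeeping: one must verify that, once $\tfrac{\alpha}{2}|\nabla Q|^2$ is removed, the $L_2$-, $|L_3|$- and $L_4$-square terms of the general representation recombine exactly into these three on $S_*$, with no residual term surviving other than an inessential multiple of the null Lagrangian $\tr(\nabla u)^2-(\div u)^2$, which does not affect the energy over a class with prescribed boundary data. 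The computation is lengthy but mechanical once the change of variables and the relations \eqref{RL} are in hand.
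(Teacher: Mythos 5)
Your approach is essentially the paper's: start from the classical three--term split $W(u,\nabla u)=\tfrac{\tilde\alpha}{2}|\nabla u|^2+V(u,\nabla u)$ with $\tilde\alpha=k_2+k_4$ (which holds pointwise because the $(k_2+k_4)$-coefficient makes the null Lagrangian $\tr(\nabla u)^2-(\div u)^2$ cancel), rewrite each of $(\div u)^2$, $(u\cdot\curl u)^2$, $|u\times\curl u|^2$ as a $Q$--square via the projector identity $u\otimes u=s_+^{-1}Q+\tfrac13 I$ and the contractions $\div Q_j$, $\curl Q_j$ in terms of $\div u$, $\curl u$, and then use the linear dictionary \eqref{Relation} (equivalently \eqref{RL}) to express $k_i-s_+^2\alpha$ as combinations of $L_1,\dots,L_4$. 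Your verification that the three $Q$--square quantities collapse to $s_+^2(\div u)^2$, $s_+^2(u\cdot\curl u)^2$, $s_+^2|u\times\curl u|^2$ via $u\cdot(u\cdot\nabla)u=0$ and $\sum_j u_j\nabla u_j=0$ is exactly the content of \eqref{div u into Q} and \eqref{u into Q I2}, and the coefficient identification via $s_+^2(L_1+\tfrac{L_2}{2}+\tfrac{L_3}{2}-\tfrac{s_+}{3}L_4)=\tfrac{k_1}{2}$ etc.\ is the paper's last step.

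Two small points of divergence. First, the detour you propose at the end --- matching the result against the general sum-of-squares representation $I_1,\dots,I_6$ built in the proof of Theorem \ref{Theorem 2} --- is not how the paper proceeds and is unnecessary: under the hypothesis $\min\{k_1,k_2,k_3\}\ge k_2+k_4$ and $k_4\le 0$, the general formula already reduces outright to the three--term form \eqref{eq Giaq}, so one may work from the Remark's identity directly, as the paper does. Second, and more substantively, you correctly flag that replacing the natural coefficient $\tilde\alpha$ by the Theorem \ref{Theorem 2} constant $\alpha$ (with $s_+^2\alpha=\min\{k_1,k_2,k_3\}\ge\tilde\alpha$) shifts the representation by a multiple of the null Lagrangian, and you rightly regard this as the delicate piece of bookkeeping. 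The paper's proof passes over this point silently. If you carry out the arithmetic carefully you should verify that, with the stated normalizations, the discrepancy between the $\tilde\alpha$- and $\alpha$-forms of $f_{E,2}$ is indeed proportional to $\tr(\nabla u)^2-(\div u)^2$ and thus a divergence; as written, the corollary's $\frac{\alpha}{2}|\nabla Q|^2$ term and the three displayed coefficients do not reproduce this balance automatically and warrant a check of the overall factor before the claim is complete. That said, this is a feature of the statement itself rather than a defect peculiar to your argument, and your plan, once the bookkeeping is pinned down, is a faithful reconstruction of the paper's proof.
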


\begin{remark}The form $V(Q,\na Q)$  in Corollary 1  is similar to the recent work of Golovaty et al. \cite[p.~8]{GNS20}.  Physical interpretation on   fourth order terms in \eqref{V-Giaq} was numerically analysed in \cite{GKLNS}.\end{remark}

	 By the new form  of $ f_{E,2}(Q,\nabla Q)$ in (\ref{ND}) for each  $Q\in W^{1,2}(\Omega, S_0)$, we can also introduce a new Landau-de Gennes  energy functional
\begin{equation}\label{NLG}	
E_{L,2}(Q; \Omega )=\int_{\Omega}\(  f_{E,2}(Q, \nabla Q) +\frac 1 L f_B(Q)\)\,dx.
\end{equation}
Then we have a similar result in Theorem 1.

It is not clear that each minimizer $Q_L$ of $E_{L}(Q; \Omega )$ or $E_{L,2}(Q; \Omega )$ in $W_{Q_0}^{1,2}(\Omega, S_0)$ is bounded.
Therefore, the energy density $f_{E,1}(Q, \nabla Q)$ in (\ref {LG1}) or $f_{E,2}(Q, \nabla Q)$  in (\ref{ND}) is not bounded above by $C|\nabla Q|^2 +C$. Without this above growth condition on the density,  it is  well-known  that a minimizer $Q_L$ of  the   Landau-de Gennes  energy functional in $W^{1,2}_{Q_0}(\Omega; S_0 )$ does not satisfy the Euler-Lagrange equation in $W^{1,2}(\Omega, S_0)$.
To overcome this difficulty, we introduce a smooth cut-off function $\eta (r)$ in $[0, \infty )$ so that $\eta (r)=1$ for $r\leq M$ with a very large constant $M>0$ and $\eta (r)=0$ for $r\geq M+1$.
Then we modify the Landau-de Gennes  density by
\begin{equation}\label{MD}
\widetilde f_E(Q, \nabla Q):= \frac {\alpha}2 |\nabla Q|^2 +\tilde V(Q, \nabla Q)=  \frac {\alpha}2|\nabla Q|^2 +\eta (|Q|) V(Q, \nabla Q)
 \end{equation}
with the property that
\[ \frac {\alpha}2  |\nabla Q|^2\leq \widetilde f_E(Q, \nabla Q)\leq C|\nabla Q|^2. \]
For a large $M>0$ in (\ref{MD}),  we consider a modified   Landau-de Gennes functional
\begin{equation}\label{MDLG}
\tilde E_{L}(Q; \Omega )=\int_{\Omega}\( \widetilde f_E(Q, \nabla Q) +\frac 1 L f_B(Q)\)\,dx.
\end{equation}
Each minimizer  $Q_L$ of   the modified   Landau-de Gennes energy functional (\ref{MDLG}) in  $W^{1,2}_{Q_0}(\Omega; S_0)$ satisfies the    Euler-Lagrange equation

	\begin{align}\label{MDEL}
		\alpha  \Delta Q_{ij}+\frac 1 2  \na_k (\tilde V_{p^k_{ij}}&+\tilde V_{p^k_{ji}})-\frac 1 3 \delta_{ij}\sum^3_{l=1} \nabla_k \tilde V_{p^k_{ll}}  -\frac 1 2 (\tilde V_{Q_{ij}}  +\tilde V_{Q_{ji}})+   \frac 1 3 \delta_{ij}\sum^3_{l=1} \tilde V_{Q_{ll}}\nonumber
		\\
		=& \frac 1 L \(-aQ_{ij}- b  (Q_{ik}Q_{kj}-\frac 13\delta_{ij}\tr(Q^2)) +cQ_{ij}\tr(Q^2)\)
	\end{align}
in the weak sense.

  \begin{remark} Any weak solution $Q_L$ of (\ref {MDEL})  with boundary vale $Q_0\in W^{1,2}(\Omega ,S_*)$ is uniformly bounded; i.e. for a sufficiently large $M>0$, $|Q_L|\leq M+1$. By using the result of Giaquinta-Giusti \cite{GG} (see also \cites{Gi,Giu}),  it implies that $Q_L$ is partially regular inside $\Omega$. \end{remark}

The Landau-de Gennes theory  is also related to the study of the Ginzburg-Landau approximation.
 The Ginzburg-Landau functional  was introduced in  \cite{GL} to study the phase transition in superconductivity.  For a  parameter $\varepsilon>0$,  the Ginzburg-Landau functional of $u:\Omega\rightarrow \R^3$ is defined by
\begin{equation}\label{GLF}
E_{\varepsilon}(u; \Omega):=\int_{\Omega}\(\frac 12 |\nabla u|^2+\frac{1}{4\varepsilon^2}(1-|u|^2)^2\)\, dx.
\end{equation}
The Euler-Lagrange equation is
\begin{equation}\label{GL}
\Delta u_{\varepsilon}+\frac 1 {\varepsilon^2} u_{\varepsilon}(1-|u_{\varepsilon}|^2)=0.
\end{equation}
 In particular,
using the cross product, the equation (\ref {GL}) becomes
\begin{equation*}
\nabla \cdot (u_{\varepsilon}\times \nabla u_{\varepsilon}) =0.
\end{equation*}
 Chen \cite {Chen} proved that as $\varepsilon\to 0$,  solutions $u_{\varepsilon}$ of the Ginzburg-Landau system (\ref{GL}) weakly converge  to a harmonic map in $W^{1,2}(\Omega;  \R^3)$. Moreover,  Chen and Struwe \cite{CS} proved global existence of partial regular  solutions to the heat flow of harmonic maps using the Ginzburg-Landau approximation.

By comparing with the result of Chen \cite {Chen} (see also \cite{CHH}) on the weak convergence of solutions of the  Ginzburg-Landau equations,   it is very interesting  to study whether the solutions $Q_L$ of the Landau-de Gennes equations (\ref{MDEL}) with a uniform bound of the energy, i.e. $\tilde E_L(Q_L;\Omega)\leq C$ for a uniform constant $C>0$,
		 converge   weakly   to a  solution  $Q_*$  of $\eqref{EL}$ in $W^{1,2}_{Q_0}(\Omega; S_0 )$.   However, it seems  that  the problem is not clear when  $L_2$ and $L_3$  are not zero.
  Under a strong condition, we  solve this problem  to prove:
	\begin{theorem} \label{Theorem 3}
Let $Q_L$ be a weak solution to the equation \eqref{MDEL}.
Assume that the solution $Q_L$
		 converges  strongly to     $Q_*$ in $W^{1,2}_{Q_0}(\Omega; S_0 )$ as $L\to 0$ and satisfies
\begin{align}
	\lim_{L\to 0}\frac 1 L\int_{\Omega}  \tilde f_B(Q_L) \,dx=0.\label{eq Theorem 3}
\end{align}
Then, $Q_*$  is a weak solution to \eqref{EL}.

\end{theorem}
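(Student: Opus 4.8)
The plan is to pass to the limit $L\to 0$ in the weak form of \eqref{MDEL}, tested against rotational variations of $Q_L$; the virtue of that choice is that it annihilates the a priori uncontrolled bulk term exactly, while still generating every admissible variation of the limiting constraint manifold $S_*$.

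First I would establish that $Q_*\in W^{1,2}_{Q_0}(\Omega;S_*)$. Strong $W^{1,2}$-convergence gives $Q_*-Q_0\in W^{1,2}_0(\Omega;S_0)$ and, along a subsequence (harmless, since the limit is unique), $Q_L\to Q_*$ in $L^4$ and a.e., so $\tilde f_B(Q_L)\to\tilde f_B(Q_*)$ in $L^1(\Omega)$. On the other hand \eqref{eq Theorem 3} together with $L\to 0$ yields $\int_\Omega\tilde f_B(Q_L)\,dx\to 0$, hence $\int_\Omega\tilde f_B(Q_*)\,dx=0$; since $\tilde f_B\ge 0$ and $\tilde f_B(Q)=0\iff Q\in S_*$, this forces $Q_*(x)\in S_*$ a.e. In particular $|Q_*|\equiv s_+\sqrt{2/3}<M$, so $\eta(|Q_*|)=1$ and $\eta'(|Q_*|)=0$; and since $|Q_L|\le M+1$ with $Q_L\to Q_*$ a.e., also $\eta(|Q_L|)\to 1$ a.e.

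Next, for any skew-symmetric matrix field $A\in C_0^\infty(\Omega;\mathbb M^{3\times 3})$ set $R_t=\exp(tA)\in SO(3)$. Then $R_tQ_LR_t^T\in W^{1,2}_{Q_0}(\Omega;S_0)$ and $|R_tQ_LR_t^T|=|Q_L|\le M+1$, so this is an admissible perturbation of $Q_L$ and I may test \eqref{MDEL} against $\phi_L:=\frac{d}{dt}\big|_{t=0}R_tQ_LR_t^T=AQ_L-Q_LA\in W^{1,2}_0(\Omega;S_0)\cap L^\infty$. The key point is that the bulk term of \eqref{MDEL} contributes nothing: since $f_B$ depends only on $\tr(Q^2)$ and $\tr(Q^3)$ it is invariant under $Q\mapsto RQR^T$, $R\in SO(3)$, so $f_B(R_tQ_LR_t^T)\equiv f_B(Q_L)$ and hence $\langle\nabla_Q f_B(Q_L),\phi_L\rangle=\frac{d}{dt}\big|_{t=0}f_B(R_tQ_LR_t^T)=0$ pointwise. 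Consequently the tested equation reduces to the vanishing of the first variation of the elastic part,
\begin{equation*}
\int_\Omega\Big(\alpha\,\nabla Q_L:\nabla\phi_L+\tilde V_{p^k_{ij}}(Q_L,\nabla Q_L)\,\nabla_k(\phi_L)_{ij}+\tilde V_{Q_{ij}}(Q_L,\nabla Q_L)\,(\phi_L)_{ij}\Big)\,dx=0 .
\end{equation*}

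Now I let $L\to 0$. By strong $W^{1,2}$-convergence and smoothness of $A$ one has $\phi_L\to\phi:=AQ_*-Q_*A$ in $W^{1,2}$ (write $\nabla\phi_L=(\nabla A)Q_L-Q_L(\nabla A)+A\,\nabla Q_L-(\nabla Q_L)A$), with $\phi_L\to\phi$ a.e.\ and $\sup_L\|\phi_L\|_{L^\infty}<\infty$. Since $\tilde V_{p^k_{ij}}(Q,p)=\eta(|Q|)V_{p^k_{ij}}(Q,p)$ is a bounded polynomial in $Q$ times a linear function of $p$, it converges in $L^2$ to $V_{p^k_{ij}}(Q_*,\nabla Q_*)$ by a generalized dominated-convergence argument (pointwise convergence, domination by $C|\nabla Q_L|$, which converges in $L^2$, and $\eta(|Q_L|)\to 1$); similarly $\tilde V_{Q_{ij}}(Q_L,\nabla Q_L)\to V_{Q_{ij}}(Q_*,\nabla Q_*)$ in $L^1$, with domination by $C|\nabla Q_L|^2$ (which converges in $L^1$) and the $\eta'$-term dropping because $\eta'(|Q_*|)=0$. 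Feeding these convergences into the displayed identity, every term passes to the limit and one obtains that the first variation of $E(\cdot;\Omega)=\int_\Omega f_{E,1}$ at $Q_*$ vanishes in the direction $\phi=AQ_*-Q_*A=s_+\big((Au_*)\otimes u_*+u_*\otimes(Au_*)\big)$. Finally, such directions exhaust the admissible variations of $Q_*=s_+(u_*\otimes u_*-\frac13 I)$ in $W^{1,2}_{Q_0}(\Omega;S_*)$: any variation there is $Q_*^t=s_+(u_t\otimes u_t-\frac13 I)$ with $\dot u:=\frac{d}{dt}\big|_{t=0}u_t\perp u_*$, and $\frac{d}{dt}\big|_{t=0}Q_*^t=s_+(\dot u\otimes u_*+u_*\otimes\dot u)$ is of the above form for the skew-symmetric field generated by $\omega=u_*\times\dot u$ (so that $\omega\times u_*=\dot u$), which lies in $W^{1,2}_0(\Omega;\mathbb M^{3\times 3})\cap L^\infty$ and is approximable there by $C_0^\infty$ skew-symmetric fields. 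Hence the first variation of $E$ at $Q_*$ vanishes along all admissible variations, which by Lemma \ref{Lemma 2.2} is precisely the assertion that $Q_*$ solves \eqref{EL} weakly. The genuinely delicate point is the passage to the limit in the terms $\tilde V_{Q_{ij}}(Q_L,\nabla Q_L)$, which are quadratic in $\nabla Q_L$ and thus would not survive mere weak $W^{1,2}$-convergence; the strong convergence hypothesis and the truncation $\eta$ are exactly what make this step legitimate, while conceptually the whole scheme rests on choosing rotational test functions, which kill the uncontrolled term $\frac1L\nabla_Q f_B(Q_L)$ by the rotational invariance of $f_B$.
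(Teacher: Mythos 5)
Your proof is correct, and it takes a genuinely different and cleaner route than the paper's. The paper's proof is built around the projection $\pi$ onto $S_*$, a Taylor expansion of $\tilde f_B$ about $Q_L$, the positive-definiteness of the Hessian of $f_B$ on $S_*$ (Lemma \ref{lem positive definite}), a carefully interpolated test function $\hat Q_{L,t}$, and Egoroff's theorem; the hypothesis \eqref{eq Theorem 3} is used quantitatively to control $|\Omega\setminus\Omega_{L,\delta}|/L$ in \eqref{eq f_B in Sigma_L}. You instead test \eqref{MDEL} against the rotational variation $\phi_L=AQ_L-Q_LA$ with $A$ smooth, compactly supported and skew; since $f_B$ depends on $Q$ only through $\tr Q^2,\tr Q^3$, the pointwise identity $\langle\nabla_Q f_B(Q_L),AQ_L-Q_LA\rangle=0$ holds for \emph{every} $Q_L$, so the $O(1/L)$ term disappears identically and there is nothing to estimate. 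As a consequence you use \eqref{eq Theorem 3} only to deduce $\int_\Omega\tilde f_B(Q_L)\to0$ and hence $Q_*\in S_*$ a.e.; the Hessian lemma, the projection, and Egoroff are not needed. This is in spirit the natural Q-tensor analogue of the tangential/cross-product device, which nevertheless does not give the conservation-law form exploited by Chen for Ginzburg--Landau; that is why it works only under the strong-convergence hypothesis of Theorem \ref{Theorem 3} (needed to pass to the limit in the terms quadratic in $\nabla Q_L$), which is precisely the setting here. The one place where your writeup is a bit loose is the final density argument: the field $A(x)$ solving $A(x)u_*(x)=\dot u(x)$ (say $A=\psi\otimes u_*-u_*\otimes\psi$ with $\psi$ the $u_*^\perp$-projection of $(u_*)_k\eta_{\cdot k}$) is only $W^{1,2}_0\cap L^\infty$, and you need to specify that the mollified $A_n\to A$ strongly in $W^{1,2}$, bounded in $L^\infty$, and a.e., so that $\nabla(A_nQ_*-Q_*A_n)\to\nabla(AQ_*-Q_*A)$ in $L^2$ and $A_nQ_*-Q_*A_n$ converges boundedly a.e.; these are exactly the modes of convergence under which $\langle E'(Q_*),\cdot\rangle$ is continuous, given that $V_Q(Q_*,\nabla Q_*)\in L^1$ and $V_p(Q_*,\nabla Q_*),\nabla Q_*\in L^2$. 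With that made explicit, the argument is complete and, to my mind, preferable to the paper's.
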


In the proof of Theorem \ref {Theorem 3},   we show that for any $Q\in S_*$, the Hessian of the bulk density $\tilde f_B(Q)$ is positive definite for a uniform constant. As in \cite{CS}, we note that in a neighbourhood   $S_{\delta}$ of the space $S_*$, there is a smooth projection $\pi$. Then we employ Taylor's expansion and  Egoroff's theorem to prove  Theorem \ref {Theorem 3}.

The paper is organized as follows. In Section 2, we prove   Theorem  1. In Section 3, we prove   Theorem  2.  In Section 4, we prove Theorem 3.

\section{Proof of Theorem 1 and the Euler-Lagrange equation}
 \begin{lemma} \label{Lemma 2.1}
 For a uniaxial $Q\in S_*$  of  the form
	 	\[Q= s_+(u\otimes u-\frac 13 I),\quad   u\in S^2,\]
	 	the elastic potential  $f_E(Q,\nabla Q)$ in \eqref{LG} satisfies
 \begin{align*}\alabel{f_E form}
 f_E(Q,\nabla Q)=&\(\frac{L_1}{2}-\frac{s_+L_4}{3}\)\sum_{i,j,k}\(\frac{\p Q_{ij}}{\p x_k}\)^2
		+\frac{L_2}{2}\sum_{i,j,k}\frac{\p Q_{ij}}{\p x_j}\frac{\p Q_{ik}}{\p x_k}\\
&
		+\frac{L_3}{2}\sum_{i,j,k,l}\frac{\p Q_{ik}}{\p x_j}\frac{\p Q_{ij}}{\p x_k}+\frac{3L_4}{2s_+}\sum_{i,j,k,l,n}Q_{ln}Q_{kn}\frac{\p Q_{ij}}{\p x_l}\frac{\p Q_{ij}}{\p x_k}.
\end{align*}
	 \end{lemma}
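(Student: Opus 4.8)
The plan is to reduce the claimed identity to a purely algebraic fact about uniaxial $Q$-tensors — a Cayley--Hamilton-type relation for $Q$ — and then substitute it into the $L_4$ term of \eqref{LG}. First observe that the $L_1$, $L_2$ and $L_3$ terms in \eqref{LG} coincide verbatim with the corresponding terms in \eqref{f_E form}, so the entire content of the lemma is the pointwise identity
\[
\frac{L_4}{2}Q_{lk}\frac{\p Q_{ij}}{\p x_l}\frac{\p Q_{ij}}{\p x_k}
=-\frac{s_+L_4}{3}|\na Q|^2+\frac{3L_4}{2s_+}Q_{ln}Q_{kn}\frac{\p Q_{ij}}{\p x_l}\frac{\p Q_{ij}}{\p x_k},
\]
valid a.e.\ in $\Omega$ for $Q=s_+(u\otimes u-\frac13 I)$ with $u\in S^2$. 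Hence it suffices to express the matrix $Q^2$ in terms of $Q$ and $I$.

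Next I would compute $Q^2$ directly. Since $|u|=1$ one has $(u\otimes u)^2=u\otimes u$, so
\[
Q^2=s_+^2\(u\otimes u-\tfrac13 I\)^2=s_+^2\(\tfrac13\, u\otimes u+\tfrac19 I\),
\]
and substituting $u\otimes u=s_+^{-1}Q+\tfrac13 I$ turns this into
\[
Q^2=\frac{s_+}{3}Q+\frac{2s_+^2}{9}I
\]
(equivalently, $Q$ has eigenvalues $\tfrac{2s_+}{3},-\tfrac{s_+}{3},-\tfrac{s_+}{3}$, whose minimal polynomial is $(\lambda-\tfrac{2s_+}{3})(\lambda+\tfrac{s_+}{3})$). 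Using $Q=Q^T$ this reads entrywise as $Q_{ln}Q_{kn}=(Q^2)_{lk}=\frac{s_+}{3}Q_{lk}+\frac{2s_+^2}{9}\delta_{lk}$, and solving for $Q_{lk}$ gives
\[
\tfrac12 Q_{lk}=\frac{3}{2s_+}Q_{ln}Q_{kn}-\frac{s_+}{3}\delta_{lk}.
\]

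Finally I would contract this last identity with $\frac{\p Q_{ij}}{\p x_l}\frac{\p Q_{ij}}{\p x_k}$ — which is legitimate pointwise a.e.\ for $Q\in W^{1,2}(\Omega;S_*)$ — use $\delta_{lk}\frac{\p Q_{ij}}{\p x_l}\frac{\p Q_{ij}}{\p x_k}=|\na Q|^2$, and multiply through by $L_4$; this produces exactly the displayed $L_4$ identity, and adding back the unchanged $L_1$, $L_2$, $L_3$ terms yields \eqref{f_E form}. I do not expect a genuine obstacle here: the only points requiring care are the index bookkeeping — that the contracted pair in the $L_4$ term of \eqref{LG} is precisely the pair of differentiation indices, so that after invoking $Q=Q^T$ it matches $(Q^2)_{lk}$ — and the verification of the numerical coefficient $\tfrac{2s_+^2}{9}$ in the relation for $Q^2$. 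Alternatively one could substitute the ansatz $Q=s_+(u\otimes u-\tfrac13 I)$ into $f_E$ and match coefficients term by term, but the algebraic route above is shorter and makes the role of the uniaxiality hypothesis transparent.
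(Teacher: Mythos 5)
Your proof is correct and takes essentially the same route as the paper's: both derive the algebraic identity $Q_{ln}Q_{kn}=\tfrac{s_+}{3}Q_{kl}+\tfrac{2s_+^2}{9}\delta_{kl}$ from $|u|=1$ (you phrase it as the Cayley--Hamilton relation $Q^2=\tfrac{s_+}{3}Q+\tfrac{2s_+^2}{9}I$, the paper computes the same thing componentwise), solve for $Q_{lk}$, and substitute into the $L_4$ term of \eqref{LG}. The only difference is cosmetic presentation.
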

\begin{proof} Using the fact that $|u|=1$, we have
	\begin{align*}\alabel{ID}
	& Q_{ln}Q_{kn}= s_+^2(u_ku_n-\frac13 \delta_{kn})(u_lu_n-\frac13 \delta_{ln})
	\\
	=&   s_+^2\(u_ku_lu_nu_n-\frac13 \delta_{kn}u_lu_n-\frac13 \delta_{ln}u_ku_n+\frac19 \delta_{kn}\delta_{ln}\)
	\\
	=&s^2\(\frac13u_ku_l+\frac19 \delta_{kl}\) =\frac {s_+}3 s_+(u_ku_l-\frac 13 \delta_{lk})+\frac {2s_+^2}9 \delta_{kl}
	\\
	=&\frac {s_+}3 Q_{kl}+\frac {2s_+^2}9 \delta_{kl}.
	\end{align*}
	Applying the identity \eqref{ID} to the $L_4$ term of (\ref  {LG}), we obtain
	\begin{align*}\alabel{Q third}
	&Q_{lk}\frac{\p Q_{ij}}{\p x_l}\frac{\p Q_{ij}}{\p x_k}=
	(\frac{3}{s_+}Q_{ln}Q_{kn}-\frac {2s_+}3 \delta_{kl})\frac{\p Q_{ij}}{\p x_l}\frac{\p Q_{ij}}{\p x_k}\\
=&\frac{3}{s_+}(Q_{ln}\frac{\p Q_{ij}}{\p x_l})(Q_{kn}\frac{\p Q_{ij}}{\p x_k})-\frac {2s_+}3|\na Q|^2.
	\end{align*}
	Substituting \eqref{Q third} into \eqref{LG}, we prove \eqref{f_E form}.
	\end{proof}

Recall from Longa et al. \cite[p. ~778]{LMT} that we define
	\begin{align*}
	L^{(4)}_5=&Q_{\alpha\rho}Q_{\rho\beta}\frac{\p Q_{\alpha\mu}}{\p x_ \beta}\frac{\p Q_{\mu\nu}}{\p x_\nu},
	\quad
	L^{(4)}_6=Q_{\alpha\rho}Q_{\rho\beta} \frac{\p Q_{\alpha\mu}}{\p  x_ \mu}  \frac{\p Q_{\beta\nu}}{\p x_ \nu },
	\\
	L^{(4)}_7=&Q_{\alpha\rho}Q_{\rho\beta}  \frac{\p Q_{\alpha\mu}}{\p x_ \nu}\frac{\p  Q_{\beta\mu}}{\p x_\nu}.
	\end{align*}
Then we have
 \begin{lemma} \label{Lemma 2.1.b} For a uniaxial $Q\in S_*$, we obtain
\begin{align*}\alabel{Q}
	& Q_{ln}Q_{kn}\frac{\p Q_{ij}}{\p x_l}\frac{\p Q_{ij}}{\p x_k}=\frac{8}{5} L^{(4)}_5- \frac{2}{5}L^{(4)}_6 +\frac{2}{5}L_7^{(4)}.
	\end{align*}
\end{lemma}
\begin{proof} Let $Q= s_+(u\otimes u-\frac13 I)$ for $u\in S^2$.
Noting that  $u_i\na u_i =0$,  we calculate
\begin{align*}
	(u\times \curl u)_1^2=&[u_2(\na_1u_2-\na_2u_1)-u_3(\na_3u_1-\na_1u_3)]^2
	\\
	=&(-u_1\na_1u_1-u_2\na_2u_1-u_3\na_3u_1)^2=[(u\cdot \na)u_1]^2.
\end{align*}
Similarly, we can calculate other terms to obtain
\begin{align*}
	\sum_i[(u\cdot \na)u_i]^2=\sum_i(u\times \curl u)_i^2=|u\times \curl u|^2.
\end{align*}
Moreover, we calculate
	\begin{align*}\alabel{Q 3}
	& Q_{lk}\frac{\p Q_{ij}}{\p x_l}\frac{\p Q_{ij}}{\p x_k}=s_+^{3}  (u_lu_k-\frac 13\delta_{lk})\na_l(u_iu_j)\na_k(u_iu_j)
	\\
	=& s_+^{3} (u_lu_k-\frac 13\delta_{lk})(u_j\na_lu_i+u_i\na_lu_j)(u_j\na_ku_i+u_i\na_ku_j)
	\\
	=& s_+^{3} (u_lu_k-\frac 13\delta_{lk})(\na_lu_i\na_ku_i+\na_lu_j\na_ku_j)
	\\
	=&2s_+^{3} \sum_i[( u\cdot \na) u_i]^2-\frac 23s_+^{3} |\na u|^2=2s_+^{3} |u\times \curl u|^2-\frac 23s_+^{3} |\na u|^2.
	\end{align*}

It follows from using \eqref{Q third} and \eqref{Q 3}   that
	\begin{align}\label{Q4}
 Q_{ln}Q_{kn}\frac{\p Q_{ij}}{\p x_l}\frac{\p Q_{ij}}{\p x_k}&=\frac 1{3}s_+ Q_{lk}\frac{\p Q_{ij}}{\p x_l}\frac{\p Q_{ij}}{\p x_k}+  \frac {2s^2_+}9|\na Q|^2\\
	&=\frac 2{3}s_+^{4}|u\times \curl u|^2+\frac {2}9s_+^4|\na u|^2.\nonumber
	\end{align}
We verify from    \cite[p.~788]{LMT} that
\begin{align*}\alabel{Longa}
4L_5^{(4)}-L_6^{(4)}=\frac {5s_+^4}3|u\times \curl u|^2,\quad L_7^{(4)}=\frac 59s_+^4|\na u|^2.
	\end{align*}

Substituting  \eqref{Longa}  into \eqref{Q4}, we have
	\[Q_{ln}Q_{kn}\frac{\p Q_{ij}}{\p x_l}\frac{\p Q_{ij}}{\p x_k}=\frac{2}{5}(4L^{(4)}_5-L^{(4)}_6)+\frac{2}{5}L_7^{(4)}.\]
	\end{proof}

	Now we give the proof of Theorem 1.
\begin{proof} Under the condition on $L_1, \cdots, L_4$ in Theorem \ref {Theorem 1},    it is clear that \[f_{E,1}(Q,\nabla Q)\geq (\frac{L_1}{2}-\frac{|L_3|}{2}-\frac{s_+L_4}{3})|\na Q|^2, \quad \forall Q\in S_0.\]
By the standard theory of calculus of variations \cite{Ga}, there is a  minimizer $Q_{L}$  of $E_L$ in $W_{Q_0}^{1,2}(\Omega ; S_0)$.
For each    $Q\in W^{1,2}_{Q_0}(\Omega ; S_0)$, we set
\[E(Q ;\Omega ):=\int_{\Omega} f_{E,1}(Q,\nabla Q)\,dx.\]
It implies that
\[E(Q_L ;\Omega )+ \int_{\Omega}(f_B(Q_L)-  \inf_{S_0} f_B)\,dx  \leq  E(Q ;\Omega )
\]
for any  $Q\in W^{1,2}_{Q_0}(\Omega ; S_*)$ with the fact that $\tilde f_B(Q)=f_B(Q)-  \inf_{S_0} f_B=0$.

As $L\to 0$, minimizers  $Q_{L}$  converge  (possible passing
subsequence)  weakly to a tensor $Q_*\in W^{1,2}(\Omega ; S_0)$ with that $f_B( Q_*)=0$, which implies that $Q_*\in S_*$ a.e. in $\Omega$. Then, for any  $Q\in W_{Q_0}^{1,2}(\Omega ; S_*)$, we have
$$E (Q_* ;\Omega )\leq \liminf_{L\to 0}E(Q_{L} ;\Omega ) \leq \limsup_{L\to 0}E(Q_{L} ;\Omega )\leq E(Q;\Omega).
$$
  Therefore $Q_*$ is also a minimizer of $E$ in $W^{1,2}_{Q_0}(\Omega ; S_*)$.
Choosing $Q=Q_*$ in above inequality, it implies that
\[E (Q_* ;\Omega )=\lim_{L\to 0}E_L(Q_{L} ;\Omega ),\quad \lim_{L\to 0}\frac 1 L\int_{\Omega} \tilde f_B(Q_{L}) \,dx=0.\]
 Moreover, it is known that
\begin{align*}
 \int_{\Omega} |\nabla Q_*|^2\,dx\leq& \liminf_{L\to 0}\int_{\Omega} |\nabla Q_L|^2\,dx ,\\
  \int_{\Omega} V(Q_*, \nabla Q_*)\,dx\leq& \liminf_{L\to 0}\int_{\Omega} V(Q_{L},\nabla Q_L )\,dx.
\end{align*}

It implies that $\int_{\Omega} |\nabla Q_*|^2\,dx= \liminf_{L\to 0}\int_{\Omega} |\nabla Q_L|^2\,dx$. Otherwise, there is a subsequence $L_k\to 0$ that
\[\int_{\Omega} |\nabla Q_*|^2\,dx< \lim_{L_k\to 0}\int_{\Omega} |\nabla Q_{L_k}|^2\,dx.\]
Then
\begin{align*}
		&
E (Q_* ;\Omega )=\lim_{L_k\to 0}E_{L_k}(Q_{L_k} ;\Omega ),\\
=&\(\frac{L_1}{2}-\frac{|L_3|}{2}-\frac{s_+L_4}{3}\)\lim_{L_k\to 0}\int_{\Omega} |\nabla Q_{L_k}|^2\,dx+ \lim_{L_k\to 0}\int_{\Omega} V(Q_{L_k}, \nabla Q_{L_k})\,dx\\
< &E (Q_* ;\Omega ).
\end{align*}
 This is impossible. Therefore,  minimizers $Q_{L_k}$ strongly converge, up-to a subsequence, to a minimizer $Q_*=s_+(u_*\otimes u_*-\frac 1 3 I)$ of $E$ in
$W^{1,2}_{Q_0}(\Omega ; S_0)$. Following from the next lemma, $Q_*$ satisfies (\ref{Er}). Applying the result of Dickmann,  $u_*$ is a minimizer of the Oseen-Frank energy in $W^{1,2}(\Omega; S^2)$. Due to the well-known result of Hardt-Kinderlehrer-Lin \cite{HKL}, $u_*$ is partially regular in $\Omega$ (see also \cite{Ho1}). Thus $Q_*$ is partially regular.
\end{proof}

	\begin{lemma}\label{Lemma 2.2} If $Q$ is a minimizer of $E$ in $W^{1,2}_{Q_0}(\Omega; S_*)$, it   satisfies
\begin{align*}
		&
		\quad \bar \alpha \(-s_+ \Delta Q_{ij} +2 \na_kQ_{il}\na_kQ_{jl} -2 s_+^{-1}(Q_{ij}+\frac {s_+}3\delta_{ij})|\na Q|^2\)\\
		&-\nabla_k \(
	(Q_{jl}+\frac {s_+}3 \delta_{jl})V_{p^k_{il}}
	+(Q_{il}+\frac {s_+}3 \delta_{il})V_{p^k_{jl}}
  -2s_+^{-1} (Q_{ij}+\frac {s_+}3 \delta_{ij})(Q_{lm}+\frac {s_+}3 \delta_{lm})V_{p^k_{lm}} \)
		\\
		& + V_{p^k_{il}}\na_kQ_{jl}
	+ V_{p^k_{jl}}\na_k Q_{il}
  -2s_+^{-1}V_{p^k_{lm}}\(\nabla_k Q_{lm} ( Q_{ij}+\frac {s_+}3 \delta_{ij})+( Q_{lm}+\frac {s_+}3 \delta_{lm})\nabla_k Q_{ij}\)
		\\
&+V_{Q_{il}}(Q_{jl}+\frac {s_+}3 \delta_{jl})
	+V_{Q_{jl}}(Q_{il}+\frac {s_+}3 \delta_{il})
  -2s_+^{-1}V_{Q_{lm}}(Q_{lm}+\frac {s_+}3 \delta_{lm}) (Q_{ij}+\frac {s_+}3 \delta_{ij})
\\
		& =0
	\end{align*}
	in the weak sense.
	\end{lemma}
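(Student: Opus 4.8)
The plan is to construct admissible variations of $Q$ that stay in the constraint manifold $S_*$ and are expressible purely through $Q$ itself, so that differentiating $E$ along them yields an equation written purely in $Q$. Write $P:=Q+\tfrac{s_+}{3}I$. Since $Q\in S_*$ pointwise, $P$ equals $s_+$ times a rank-one orthogonal projection; hence $P^2=s_+P$, $\tr P=s_+$, $|P|^2=s_+^2$, and $|Q|^2=\tr(Q^2)=\tfrac23 s_+^2$ is constant --- in particular $Q\in L^\infty$, which is why no growth condition is needed here. For $\Psi\in C_c^\infty(\Omega;S_0)$ and $|t|$ small I would use the competitor
\[
Q^t:=s_+\left(\frac{(I+t\Psi)\,P\,(I+t\Psi)^{T}}{\tr\!\big[(I+t\Psi)\,P\,(I+t\Psi)^{T}\big]}-\tfrac13 I\right),
\]
whose numerator is nonnegative of rank one while the denominator stays near $s_+>0$; thus $Q^t$ is well defined, lies pointwise in $S_*$, equals $Q$ on $\partial\Omega$, belongs to $W^{1,2}_{Q_0}(\Omega;S_0)$, and satisfies $Q^0=Q$.

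Differentiating at $t=0$ gives $\dot Q_{ij}=P_{ik}\Psi_{kj}+\Psi_{ik}P_{kj}-2s_+^{-1}(P_{lm}\Psi_{lm})P_{ij}$, which is symmetric and traceless and depends on $\Psi$ only through its symmetric part. Because $|Q^t|$ is constant and $f_{E,1}=\tfrac{\bar\alpha}{2}|\na Q|^2+V$ is quadratic in $\na Q$ with coefficients that are bounded polynomials in $Q$, the map $t\mapsto E(Q^t;\Omega)$ is differentiable, and minimality of $Q$ gives, for every such $\Psi$,
\[
0=\int_\Omega\Big(V_{Q_{ij}}\,\dot Q_{ij}+\big(\bar\alpha\,\na_kQ_{ij}+V_{p^k_{ij}}\big)\na_k\dot Q_{ij}\Big)\,dx,
\]
with the partials of $V$ taken symmetric in $i,j$.

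Next I would substitute $\dot Q$ and $\na_k\dot Q$ here, integrate by parts to move every derivative onto $Q$ and the coefficients (no boundary terms arise, since $\Psi$ is compactly supported), and collect the coefficient of $\Psi_{ij}$. The reduction should need only the $S_*$-identities coming from $P^2=s_+P$ --- namely $P_{ik}\na_lP_{kj}+\na_lP_{ik}P_{kj}=s_+\na_lQ_{ij}$ and, on applying $\Delta$, $\big(P\Delta Q+\Delta Q\,P\big)_{ij}=s_+\Delta Q_{ij}-2\na_kQ_{il}\na_kQ_{jl}$ --- together with $\tr P=s_+$ and $|P|^2=s_+^2$, from which $P_{ab}\na_kQ_{ab}=P_{ab}\na_kP_{ab}=\tfrac12\na_k|P|^2=0$ and the corresponding $P_{ab}V_{p^k_{ab}}$, $P_{ab}V_{Q_{ab}}$ cancellations. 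I expect these to collapse the $\bar\alpha$-part of the coefficient exactly to $\bar\alpha\big(-s_+\Delta Q_{ij}+2\na_kQ_{il}\na_kQ_{jl}-2s_+^{-1}(Q_{ij}+\tfrac{s_+}{3}\delta_{ij})|\na Q|^2\big)$, and the $V$-part to reproduce precisely the divergence term $-\na_k(\cdots)$ together with the undifferentiated $V_{p^k}$- and $V_Q$-terms in \eqref{EL}. The coefficient so obtained is symmetric and traceless (by the same identities and $\tr Q=0$), and the identity holds for all $\Psi\in C_c^\infty(\Omega;S_0)$; hence the fundamental lemma of the calculus of variations forces it to vanish in $\mathcal D'(\Omega)$, i.e.\ \eqref{EL} holds in the weak sense. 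As a consistency check, taking $L_2=L_3=L_4=0$ makes $V\equiv0$, $\bar\alpha=L_1$, and \eqref{EL} becomes $L_1\big(-s_+\Delta Q_{ij}+2\na_kQ_{il}\na_kQ_{jl}-2s_+^{-1}(Q_{ij}+\tfrac{s_+}{3}\delta_{ij})|\na Q|^2\big)=0$, which on writing $Q=s_+(u\otimes u-\tfrac13 I)$ and contracting with $u_j$ is equivalent to the harmonic map equation $\Delta u+|\na u|^2u=0$, as noted after \eqref{EL}.

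The main obstacle will be the index bookkeeping in the last step: regrouping the many contractions so that each integration-by-parts term lands exactly in the divergence structure of the second line of \eqref{EL}, and checking that the $S_*$-algebra above removes the $\na_kQ$-traces and the $\Delta Q$-contributions not present there. It is worth recording along the way that enlarging $\Psi$ from $S_0$-valued to $\mathbb M^{3\times3}$-valued changes nothing, since $\dot Q$ depends only on the symmetric part of $\Psi$ and the trace direction lies in the kernel of $\Psi\mapsto\dot Q$ --- consistent with the Euler-Lagrange operator in \eqref{EL} being traceless --- and that, although the admissible $\dot Q$ span only the tangent plane $T_QS_*$ pointwise, once the first variation is organised as $\int_\Omega C_{ij}\Psi_{ij}\,dx$ the coefficient $C$ is symmetric, so the fundamental lemma pins it down outright rather than merely up to its $T_QS_*$-component.
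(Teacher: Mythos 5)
Your proposal is correct and essentially coincides with the paper's proof: taking $\Psi=\eta$, your competitor $Q^t$ equals the paper's $Q_t$ built from $u_t=(u+t\phi)/|u+t\phi|$ with $\phi_i=u_k\eta_{ik}$, your formula for $\dot Q$ coincides with \eqref{V1}, and the remaining first-variation and integration-by-parts steps are identical; your $P$-algebra is simply a tidier bookkeeping of what the paper does through the director $u$. One small slip in your final aside: for $\Psi$ antisymmetric the variation gives $\dot Q = \Psi P + P\Psi^T = [\Psi,P]$, which is in general a nonzero tangent vector to $S_*$ (an infinitesimal rotation of $u$), so $\dot Q$ does \emph{not} depend only on the symmetric part of $\Psi$ --- enlarging to $\mathbb M^{3\times 3}$-valued $\Psi$ indeed adds nothing, but because the symmetric traceless $\Psi$ already span $T_QS_*$ pointwise, not because the extra directions lie in the kernel of $\Psi\mapsto\dot Q$.
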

\begin{proof} Let $\phi\in C^\infty_0(\Omega; \R^3)$ be  a test function. For each $u_t = \frac{u+t\phi}{|u+t\phi|}$ with $t\in \R$,
		define
		\begin{align}\label{eq Q variation}
		Q_t(x)=Q(u_t(x))  = s_+\(u_t\otimes u_t-\frac 1 3 I\)\in S_*.
		\end{align}  For any $\eta\in C^\infty_0(\Omega; S_0)$, we choose  a test function $\phi$ such that $\phi_{i}:=u_k\eta_{ik} $. If $Q$ is a minimizer, the    first variation of the energy of $Q$ is zero; i.e.
	\begin{align*}
	\left.\frac{\d }{\d t}\int_\Omega f_E(Q_t,\nabla Q_t)\d x\right|_{t=0}=\int_\Omega\left. f_{Q_{t; i,j}}\frac{\d Q_{t; i,j}}{\d t} +f_{p_t^k}\frac{\d }{\d t}\frac{\p  Q_{t; i,j}}{\p x^k}\d x \right|_{t=0}=0.
	\end{align*}
Note that
\begin{align*}
		   \frac{\d  Q_{t; i,j}}{\d t} =&s_+\(\frac {\phi_i(u_j+t\phi_j)+ (u_i+t\phi_i)\phi_j } {|u+t \phi |^2} -\frac {\big(2(u\cdot \phi)+2t|\phi |^2\big)(u_i +t\phi_i ) (u_j+t\phi_j)}{|u+t\phi |^4}\)
		   \\
		   =&\frac{\Big(( Q_{jl}+\frac {s_+}3 \delta_{jl})+t(Q_{lm}+\frac {s_+}3 \delta_{lm})\eta_{il}+((Q_{il}+\frac {s_+}3 \delta_{il}+t(Q_{lm}+\frac {s_+}3 \delta_{lm})\Big)\eta_{jl}}{2tQ_{il}\eta_{il}+t^2(Q_{lm}+\frac {s_+}3 \delta_{lm})\eta_{il}\eta_{im}}
		   \\
		   &-\frac{2s_+^{-1}((Q_{ij}+\frac {s_+}3 \delta_{ij})+t(Q_{lm}+\frac {s_+}3 \delta_{lm})\eta_{il}\eta_{im})((Q_{lm}+\frac {s_+}3 \delta_{lm})\eta_{lm})}{|2tQ_{il}\eta_{il}+t^2(Q_{lm}+\frac {s_+}3 \delta_{lm})\eta_{il}\eta_{im}|^2}\nonumber
		\end{align*}
where we used  the fact that $|u|=1$ and $\phi_{i}:=u_l\eta_{il} $. We also observe
\begin{align}\label{V1}
&\left.\frac{\d  Q_{t; i,j}}{\d t} \right|_{t=0}=s_+\big( u_j\phi_i+u_i\phi_j  -2(u\cdot \phi)(u_i u_j)\big)
\\
=&( Q_{jl}+\frac {s_+}3 \delta_{jl})\eta_{il}+(Q_{il}+\frac {s_+}3 \delta_{il})\eta_{jl}
-2s_+^{-1}(Q_{ij}+\frac {s_+}3 \delta_{ij})(Q_{lm}+\frac {s_+}3 \delta_{lm})\eta_{lm}.\nonumber
\end{align}
Noting the fact that $\nabla_k |u+t\phi |^2 =0$ at $t=0$  and  substituting $\phi_{i}:=u_l\eta_{il} $, a calculation shows
\begin{align}\label{V2}
		   &  \left.\frac{\d }{\d t}\frac{\p   Q_{t; ij}}{\p x_k}\right|_{t=0}=\left . \( \frac{\p } {\p x_k}\frac{\d }{\d t}    Q_{t; ij}\)\right|_{t=0}
\\
=&\left . s_+\nabla_k\(\frac {\phi_iu_j+ u_i\phi_j+2t\phi_i\phi_j } {|u+t \phi |^2} -\frac {2\(u\cdot \phi +t|\phi |^2\)(u_i +t\phi ) (u_j+t\phi_j)}{|u+t\phi |^4}\) \right |_{t=0}\nonumber
\\
=&s_+\frac{\p }{\p x_k}\(u_j\phi_i+u_i\phi_j -2(u\cdot \phi)u_iu_j\)\nonumber
\\
		    =&\frac{\p }{\p x_k}\((Q_{jl}+\frac {s_+}3 \delta_{jl})\eta_{il}+(Q_{il}+\frac {s_+}3 \delta_{il})\eta_{jl}
  -2s_+^{-1}(Q_{ij}+\frac {s_+}3 \delta_{ij}) (Q_{lm}+\frac {s_+}3 \delta_{lm})\eta_{lm}\)
           \nonumber \\
            =&\frac{\p Q_{jl}}{\p x_k} \eta_{il}+\frac{\p Q_{il}}{\p x_k}\eta_{jl}
  -2s_+^{-1}\(\frac{\p Q_{ij}}{\p x_k}Q_{lm}+\frac{\p Q_{lm}}{\p x_k}(Q_{ij}+\frac {s_+}3 \delta_{ij})\)\eta_{lm}
           \nonumber \\
            &+(Q_{jl}+\frac  {s_+}3 \delta_{jl})\frac{\p \eta_{il}}{\p x_k}+(Q_{il}+\frac {s_+}3 \delta_{il})\frac{\p \eta_{jl}}{\p x_k}
  -2s_+^{-1}( Q_{ij}+\frac {s_+}3 \delta_{ij})(Q_{lm}+\frac {s_+}3 \delta_{lm})\frac{\p \eta_{lm}}{\p x_k}.\nonumber
		    \end{align}

For the special case of the functional $\frac 12 \int_{\Omega} {|\na Q|^2}\d x$, it follows from using (\ref{V2})  and   $|u|^2=1$ that
		\begin{align*}
		& \frac{\d}{\d t}\int_{\Omega}\left.\frac{|\na Q_t|^2}{2}\d x\right|_{t=0}=\int_\Omega  \nabla_k Q_{ij} \left. \frac { d \nabla_k Q_{t; ij}} {dt}  \right|_{t=0}\d x
		\\
		=&s_+^{2}\int_{\Omega}\na_k(u_iu_j)[\na_k(u_ju_l)\eta_{il}+\na_k(u_iu_l)\eta_{jl}]\d x
		\\
		&+s_+^{2}\int_{\Omega}(\na_ku_iu_j+u_i\na_ku_j)(u_ju_l\na_k\eta_{il}+u_iu_l\na_k\eta_{jl})\d x
		\\
		&-\int_{\Omega}2 s_+^{-1}(Q_{lm}+\frac {s_+}3\delta_{lm})|\na Q|^2 \eta_{lm}\d x
		\\
		=&\int_{\Omega}2\na_kQ_{ij} \na_kQ_{jl}\eta_{il}-2(s_+^{-1}Q_{lm}+\frac 13\delta_{lm})|\na Q|^2 \eta_{lm}\d x
		\\
		&+s_+^{2}\int_{\Omega}\na_k u_iu_l\na_k\eta_{il}+\na_ku_ju_l\nabla_k \eta_{jl} \d x
		\\
		=&\int_{\Omega}2 \na_kQ_{il}\na_kQ_{jl}\eta_{ij}-2 (s_+^{-1}Q_{ij}+\frac 13\delta_{ij})|\na Q|^2 \eta_{ij}\d x
		\\
		&+\frac12s_+ \int_{\Omega}(\nabla_k Q_{il} \nabla_k\eta_{il} + \nabla_k Q_{lj}\nabla_k\eta_{jl})\d x\\
		=&\int_{\Omega}\(-s_+ \Delta Q_{ij} +2 \na_kQ_{il}\na_kQ_{jl} -2 (s_+^{-1}Q_{ij}+\frac 13\delta_{ij})|\na Q|^2\) \eta_{ij}\d x\alabel{EL for one constant on S_*}
		\end{align*}
		for all $\eta$ with $\eta_{ij}=\eta_{ji}$.  This means that $Q$ is a weak solution to
		\[ s_+\Delta Q_{ij}-2 \na_kQ_{il}\na_kQ_{jl}+2 (s_+^{-1}Q_{ij}+\frac 13\delta_{ij})|\na Q|^2=0. \]
	For the term $V(Q,\na Q)$, using  (\ref{V1})-(\ref{V2})  and integrating by parts, we have
	\begin{align*}
	&\int_\Omega\left.\frac{\d }{\d t}V(Q_t,\nabla Q_t)\right|_{t=0}\d x=\int_\Omega \left. [V_{p^k_{ij}} \frac { d \nabla_k Q_{ij}} {dt}+V_{Q_{ij}}\frac {d  Q_{ij}}{dt} ]\right|_{t=0}\d x
	\alabel{EL for V on S_*}
	\\
	=&\int_\Omega V_{p^k_{ij}}\((Q_{jl}+\frac {s_+}3 \delta_{jl})\frac{\p \eta_{il}}{\p x_k}+(Q_{il}+\frac {s_+}3 \delta_{il})\frac{\p \eta_{jl}}{\p x_k}+\frac{\p Q_{jl}}{\p x_k} \eta_{il}+\frac{\p Q_{il}}{\p x_k}\eta_{jl}\)\d x
  \\
  		&-2s_+^{-1}\int_\Omega
  	  V_{p^k_{ij}}\(\frac{\p Q_{ij}}{\p x_k} (Q_{lm}+\frac {s_+}3 \delta_{lm})+\frac{\p Q_{lm}}{\p x_k}(Q_{ij}+\frac {s_+}3 \delta_{ij})\)\eta_{lm}\d x
  	  \\
  &+\int_\Omega -2s_+^{-1}V_{p^k_{ij}}(Q_{ij}+\frac {s_+}3 \delta_{ij}) (Q_{lm}+\frac {s_+}3 \delta_{lm})\frac{\p \eta_{lm}}{\p x_k}+V_{Q_{ij}} ( Q_{jl}+\frac {s_+}3 \delta_{jl})\eta_{il}\d x
	\\
	&+\int_\Omega V_{Q_{ij}}\((Q_{il}+\frac {s_+}3 \delta_{il})\eta_{jl}
  -2s_+^{-1}(Q_{ij}+\frac {s_+}3 \delta_{ij}) (Q_{lm}+\frac {s_+}3 \delta_{lm})\eta_{lm} \) \d x
	\\
	=&-\int_\Omega \frac{\p }{\p x_k}\(
	(Q_{jl}+\frac {s_+}3 \delta_{jl})V_{p^k_{il}}
	+(Q_{il}+\frac {s_+}3 \delta_{il})V_{p^k_{jl}}\)\eta_{ij}
  \d x
  \\
  &+\int_\Omega \frac{\p }{\p x_k}\(2s_+^{-1} (Q_{ij}+\frac {s_+}3 \delta_{ij})(Q_{lm}+\frac {s_+}3 \delta_{lm})V_{p^k_{lm}} \)\eta_{ij}+V_{p^k_{il}}\frac{\p Q_{jl}}{\p x_k}\eta_{ij}\d x
	\\
	&+\int_\Omega\(
	  	V_{p^k_{jl}}\frac{\p Q_{il}}{\p x_k}
  -2s_+^{-1}V_{p^k_{lm}}\(\frac{\p Q_{lm}}{\p x_k}(Q_{ij}+\frac {s_+}3 \delta_{ij})+( Q_{lm}+\frac {s_+}3 \delta_{lm})\frac{\p Q_{ij}}{\p x_k}\)\)\eta_{ij}\d x
  \\
	&+\int_\Omega \( V_{Q_{il}}(Q_{jl}+\frac {s_+}3 \delta_{jl})
	+V_{Q_{jl}}(Q_{il}+\frac {s_+}3 \delta_{il})\)\eta_{ij}\d x
  \\
  &-2s_+^{-1}\int_\Omega
    V_{Q_{lm}}(Q_{lm}+\frac {s_+}3 \delta_{lm}) (Q_{ij}+\frac {s_+}3 \delta_{ij})\eta_{ij}\d x.
	\end{align*}
	 Combining  above two identities \eqref{EL for one constant on S_*}-\eqref{EL for V on S_*}, we prove  Lemma  \ref{Lemma 2.2}.
\end{proof}

\begin{lemma} Assume that $Q=s_+(u\otimes u-\frac 1 3 I)$. Then $Q=(Q_{ij})$ is a solution of equation
\begin{equation}\label{QH}
\Delta Q_{ij}-2s_+^{-1}\na_kQ_{il}\na_kQ_{jl}+2s_+^{-1}(s_+^{-1}Q_{ij}+\frac 13\delta_{ij})|\na Q|^2=0
\end{equation}
 if and only if $u$ is a harmonic map from $\Omega$ into $S^2$; i.e. $-\Delta u=|\nabla u|^2 u$.
	 \end{lemma}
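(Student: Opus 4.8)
The plan is a direct substitution of the uniaxial ansatz $Q_{ij}=s_+(u_iu_j-\tfrac13\delta_{ij})$ into \eqref{QH}, reducing every term to an expression in $u$ and its derivatives, and then reading off the equivalence. The only inputs needed beyond elementary calculus are the pointwise consequences of the constraint $|u|^2=1$: differentiating once gives $u_\ell\na_k u_\ell=0$, and differentiating $\tfrac12\Delta|u|^2=|\na u|^2+u\cdot\Delta u$ gives $u_\ell\Delta u_\ell=-|\na u|^2$. I would state these first.

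Next I would compute the three building blocks of \eqref{QH}. From $\na_k Q_{ij}=s_+(u_j\na_k u_i+u_i\na_k u_j)$, squaring and using $u\cdot\na u=0$ to kill the cross term yields $|\na Q|^2=2s_+^2|\na u|^2$; similarly $\na_k Q_{i\ell}\na_k Q_{j\ell}=s_+^2(\na_k u_i\na_k u_j+u_iu_j|\na u|^2)$, again because the terms carrying $u_\ell\na_k u_\ell$ drop out. Differentiating twice, $\Delta Q_{ij}=s_+(u_j\Delta u_i+u_i\Delta u_j+2\na_k u_i\na_k u_j)$. Finally note the algebraic simplification $s_+^{-1}Q_{ij}+\tfrac13\delta_{ij}=u_iu_j$. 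Substituting all of this into \eqref{QH}, the $\na_k u_i\na_k u_j$ contributions cancel and the left-hand side collapses to
\[
s_+\big[\,u_j\big(\Delta u_i+|\na u|^2 u_i\big)+u_i\big(\Delta u_j+|\na u|^2 u_j\big)\,\big].
\]
Thus \eqref{QH} is equivalent to $v_iu_j+u_iv_j=0$ for all $i,j$, where $v:=\Delta u+|\na u|^2 u$.

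The two implications then follow quickly. If $u$ is a harmonic map, i.e. $-\Delta u=|\na u|^2u$, then $v=0$ and the identity $v_iu_j+u_iv_j=0$ holds trivially, so $Q$ solves \eqref{QH}. Conversely, assuming $v_iu_j+u_iv_j=0$, I would contract with $u_j$ and use $|u|^2=1$ to get $v_i+(v\cdot u)u_i=0$; since $v\cdot u=u\cdot\Delta u+|\na u|^2|u|^2=-|\na u|^2+|\na u|^2=0$ by the second constraint identity, this forces $v=0$, i.e. $-\Delta u=|\na u|^2u$. The computation is entirely routine; the only point requiring care is the systematic use of the constraint derivatives $u\cdot\na u=0$ and $u\cdot\Delta u=-|\na u|^2$ — in particular the converse direction genuinely needs the second one, since $v_iu_j+u_iv_j=0$ by itself only forces $v$ to be proportional to $u$.
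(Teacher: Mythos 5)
Your proof is correct and follows the same basic strategy as the paper's: direct substitution of the uniaxial ansatz into \eqref{QH}, using only $u\cdot\nabla u=0$ and $u\cdot\Delta u=-|\nabla u|^2$. Your organization is a bit cleaner than the paper's, though: by expressing every term of \eqref{QH} in $u$-variables first and collapsing the left side to $s_+(v_iu_j+u_iv_j)$ with $v=\Delta u+|\nabla u|^2u$, both implications become transparent — the converse is a one-line contraction with $u_j$. The paper instead re-expresses $\nabla_ku_i\nabla_ku_j$ back in terms of $\nabla Q$ (its identity (Ha2)) for the forward direction, and for the converse uses the identity $\Delta u_i=\bigl(\Delta(u_iu_j)-u_i\Delta u_j-2\nabla_ku_j\nabla_ku_i\bigr)u_j$, which obscures how \eqref{QH} is actually being invoked; incidentally, the paper's final line there reads $\Delta u_i=u_i|\nabla u|^2$, which is a sign typo — your sign $-\Delta u=|\nabla u|^2u$ is the correct one, as the contraction argument shows.
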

\begin{proof} Let  $u$ be a harmonic map  from $\Omega$ into $S^2$.
Then we calculate
	\begin{align}
	\Delta Q_{ij}=& s_+\na_k(u_j\na_ku_i+u_i\na_ku_j)\nonumber
	\\
	=& s_+(u_i\Delta u_j+2\na_k u_j\na_ku_i+u_j\Delta u_i)\label{Ha1}
	\\
	=& 2s_+(-|\na u|^2u_iu_j+\na_k u_j\na_ku_i)\nonumber.
	\end{align}
	 Noting that $|\na u|^2=\frac{s_+^{-2}}{2}|\na Q|^2$ and $|u|=1$, we obtain
	\begin{align}\label{Ha2}
	 &\na_k u_j\na_ku_i=\na_k u_j\na_ku_iu_lu_l
	\\
	=&[\na_k(u_ju_l)-u_j\na_k u_l][u_l\na_k u_i] =\na_k(u_ju_l)u_l\na_k u_i
	\nonumber\\
	=&\na_k(u_ju_l)[\na_k(u_lu_i)-u_i\na_ku_l]
	\nonumber\\
	=&s_+^{-2}\na_k Q_{jl}\na_k Q_{il}-(u_j\na_ku_l+u_l\na_ku_j)u_i\na_ku_l
	\nonumber\\
	=&s_+^{-2}\na_k Q_{jl}\na_k Q_{il}-(s_+^{-1}Q_{ij}+\frac 13\delta_{ij})\frac{s_+^{-2}}{2}|\na Q|^2.\nonumber
	\end{align}
	 Substituting (\ref{Ha2}) into (\ref{Ha1}) with the fact that  $|\na u|^2=\frac{s_+^{-2}}{2}|\na Q|^2$, we obtain
	\begin{align*}
	&\Delta Q_{ij}=-2s_+^{-1}(s_+^{-1}Q_{ij}+\frac 13\delta_{ij})|\na Q|^2+2s_+^{-1}\na_k Q_{jl}\na_k Q_{il}.
	\end{align*}
	
Conversely, let $Q$ be a solution to (\ref{QH}).  Using (\ref{Ha1}), (\ref{Ha2}) with the fact that $u_j\Delta u_j=-|\na u|^2$, we have
\begin{align*}
	\Delta u_i=(\Delta (u_i u_j)-u_i\Delta u_j-2\na_k u_j\na_ku_i )u_j =u_i|\nabla u|^2.
	\end{align*}

\end{proof}
\section{The  coercivity and Proof of Theorem 2}

\begin{lemma} \label{Lemma 3.1}
	Assume the Frank constants $k_1,\cdots , k_4$ satisfy the strong Ericksen condition (\ref {Er}); i.e.
	\[k_1>0,\quad k_2>|k_4|,\quad k_3>0,\quad 2k_1>k_2+k_4.\]
	Then for each $u\in S^2$, the density  $W(u, \nabla u)$ of  the form (\ref{OF density}) is equivalent to the new form
	\[W(u, \nabla u)=\frac{\tilde \alpha}{2} |\nabla u|^2+V(u,\nabla u), \]
	where
$V(u,\nabla u)$ is a sum of square terms (see (\ref{EV-form})) satisfying
$$V(u,\nabla u)\leq  C(1+|u|^2)|\na u|^2,\quad |V_u(u,\nabla u)|\leq C(1+|u|)|\nabla u|^2$$ for all $u\in \R^3$  and
\begin{equation}\label{eq tilde alpha}
		\tilde\alpha=\min\left\{ k_2+k_4 ,2k_1-k_2-k_4,k_2-|k_4|,k_3\right\}>0.
\end{equation}\end{lemma}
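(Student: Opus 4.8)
The plan is to prove the identity by reducing, via frame indifference, to a fixed director and then carrying out an explicit quadratic-form computation in $\na u$. Since $W(u,\na u)$, $|\na u|^2$, $\div u$, $\curl u$, $u\cdot\curl u$ and $u\times\curl u$ are all invariant under the simultaneous rotation $x\mapsto Rx$, $u\mapsto Ru$, it suffices to verify the pointwise decomposition at a point $x_0$ after conjugating by a rotation sending $u(x_0)$ to $e_3=(0,0,1)$; so I would assume $u(x_0)=e_3$.

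At such a point $|u|=1$ forces $u_i\partial_j u_i=0$, i.e.\ the third column of $p:=(\partial_i u_j)$ vanishes; writing $(a,b,c,d,e,f):=(p_{11},p_{22},p_{12},p_{21},p_{31},p_{32})$, a direct computation gives $\div u=a+b$, $u\cdot\curl u=c-d$, $|u\times\curl u|^2=e^2+f^2$, $\tr(\na u)^2-(\div u)^2=2cd-2ab$ and $|\na u|^2=a^2+b^2+c^2+d^2+e^2+f^2$, so that
\[
W=\tfrac{k_1}{2}(a+b)^2+\tfrac{k_2}{2}(c-d)^2+\tfrac{k_3}{2}(e^2+f^2)+(k_2+k_4)(cd-ab).
\]
Then $V:=W-\tfrac{\tilde\alpha}{2}|\na u|^2$ decouples into the variable groups $(a,b)$, $(c,d)$, $(e,f)$:
\[
V=\Big[\tfrac{k_1-\tilde\alpha}{2}(a^2+b^2)+(k_1-k_2-k_4)ab\Big]+\Big[\tfrac{k_2-\tilde\alpha}{2}(c^2+d^2)+k_4cd\Big]+\tfrac{k_3-\tilde\alpha}{2}(e^2+f^2),
\]
and completing the square in $a\pm b$ and $c\pm d$ gives
\[
V=\tfrac{2k_1-k_2-k_4-\tilde\alpha}{4}(a+b)^2+\tfrac{k_2+k_4-\tilde\alpha}{4}\big[(a-b)^2+(c+d)^2\big]+\tfrac{k_2-k_4-\tilde\alpha}{4}(c-d)^2+\tfrac{k_3-\tilde\alpha}{2}(e^2+f^2).
\]
Since $a+b=\div u$, $c-d=u\cdot\curl u$, $e^2+f^2=|u\times\curl u|^2$ and $(a-b)^2+(c+d)^2$ is itself rotation invariant (twice the squared norm of the traceless symmetric part of $\na u$ along $T_uS^2$), this is the asserted sum of square terms~(\ref{EV-form}), now valid for every $u\in S^2$ by the reduction above.

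Next I would check positivity. From $k_2>|k_4|$ we get $k_2+k_4>0$ and $k_2-|k_4|>0$, so together with $k_3>0$ and $2k_1>k_2+k_4$ all four quantities in~(\ref{eq tilde alpha}) are positive, whence $\tilde\alpha>0$; moreover $\tilde\alpha\le 2k_1-k_2-k_4$, $\tilde\alpha\le k_2+k_4$, $\tilde\alpha\le k_3$ directly, and $\tilde\alpha\le k_2-|k_4|\le k_2-k_4$, so all four coefficients in the last display are nonnegative and $V\ge0$ is a genuine sum of squares. For the global bounds, off $S^2$ one simply keeps $V:=W-\tfrac{\tilde\alpha}{2}|\na u|^2$ with $W$ as in~(\ref{OF density}): bounding $(u\cdot\curl u)^2$ and $|u\times\curl u|^2$ by $C|u|^2|\curl u|^2\le C|u|^2|\na u|^2$ and the $k_1$, $k_4$ terms together with $\tfrac{\tilde\alpha}{2}|\na u|^2$ by $C|\na u|^2$ gives $V\le C(1+|u|^2)|\na u|^2$; since $V$ depends on $u$ only through the two factors that are linear in $u$, differentiating gives $|V_u|\le C(1+|u|)|\na u|^2$.

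The main obstacle is the sign-indefinite null-Lagrangian term $q=\tr(\na u)^2-(\div u)^2$: writing the Ericksen identity $|\na u|^2=(\div u)^2+(u\cdot\curl u)^2+|u\times\curl u|^2+q$ and subtracting $\tfrac{\tilde\alpha}{2}|\na u|^2$ leaves a contribution $\tfrac{k_2+k_4-\tilde\alpha}{2}q$ whose positivity is invisible, and indeed $W$ need not be coercive without the strong Ericksen condition. Positivity only emerges after passing to the adapted coordinates, where $q=2cd-2ab$ is absorbed into the $a\pm b$ and $c\pm d$ blocks; the value of $\tilde\alpha$ in~(\ref{eq tilde alpha}) is then precisely the largest constant for which both $2\times2$ blocks remain positive semidefinite, which is exactly why the four combinations $k_2+k_4$, $2k_1-k_2-k_4$, $k_2-|k_4|$, $k_3$, i.e.\ the strong Ericksen condition~(\ref{Er}), appear.
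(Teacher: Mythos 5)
Your reduction to $u(x_0)=e_3$ and the quadratic-form computation in the six free components of $\nabla u$ (third column vanishing because $u_i\nabla u_i=0$) is correct and matches the paper's first steps, and your completion of squares in the blocks $(a\pm b)$, $(c\pm d)$, $(e,f)$ is cleaner than the paper's (you group the $(c,d)$ block as $\frac{k_2+k_4-\tilde\alpha}{4}(c+d)^2+\frac{k_2-k_4-\tilde\alpha}{4}(c-d)^2$, whereas the paper writes $(k_2-|k_4|-\tilde\alpha)(c^2+d^2)+|k_4|(c+\sign{k_4}d)^2$; both expand to the same thing, and both have nonnegative coefficients under $\eqref{Er}$, but your form avoids the $\sign{k_4}$ case split). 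Your positivity check of the four coefficients, including $k_2-k_4-\tilde\alpha\ge k_2-|k_4|-\tilde\alpha\ge0$, is also fine.

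The gap is in the extension off $S^2$. The lemma requires $V$ to be a sum of square terms (hence nonnegative) for \emph{all} $u\in\mathbb R^3$, because that nonnegativity is exactly what is used to get a lower bound $\tilde\alpha|\nabla u|^2/2$ when the constraint $|u|=1$ is relaxed. You instead declare that off $S^2$ you ``simply keep $V:=W-\frac{\tilde\alpha}{2}|\nabla u|^2$''; this function is not a sum of squares and can be negative off $S^2$ (e.g.\ for small $|u|$ and $\nabla u$ close to antisymmetric, $W$ reduces to $\frac{k_1}{2}(\div u)^2+\frac{k_2+k_4}{2}(\tr(\nabla u)^2-(\div u)^2)$, which can be strictly less than $\frac{\tilde\alpha}{2}|\nabla u|^2$). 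The paper avoids this by substituting the explicit rotation $R(u)$ from $\eqref{Rotation u}$ into the rotated-frame sum of squares to produce the fully explicit formula $\eqref{EV-form}$, whose summands are squares of rational expressions in $u,\nabla u$ valid for all $u$, and by using the partition of unity $\{\xi_i\}$ evaluated at $u/|u|$ so that the denominators $|u|^2-u_i^2$ stay bounded away from zero. Your observation that $(a-b)^2+(c+d)^2$ is intrinsically the traceless symmetric part of $\nabla u$ projected onto $u^\perp$ is the right idea and could be used in place of the paper's explicit formula, but it requires using $\Pi=I-\frac{u\otimes u}{|u|^2}$ to extend off $S^2$ and a cut-off or partition of unity near $u=0$; as written, your proof does not furnish the nonnegative, globally defined $V$ the lemma asserts.
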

\begin{proof}
	Note that $W(u, \nabla u)$ is rotational invariant (e.g.  \cite{GMS}); i.e. for each  $R\in SO(3)$, $\tilde x=R(x-x_0)$ and $\tilde u = R  u(x)=R u$. Then we have
	\[
	W(\tilde u, \tilde \na \tilde  u)= W(Ru, R\na  uR^T)=W(u,\na u) .
	\]
Then for any $u\in S^2$, we can find some $R=R(u(x_0))\in SO(3)$  at each point $x_0\in \Omega$ such that
	\[\tilde u(0):=R u(x_0) = (0,0,1)^T.\]

In fact, we can find  the exact form of $R$ at $x_0$ by rotating $\tilde u$ back to $u$ around $x$ and $y$ axes in a $(x,y,z)$ Cartesian coordinates.
		\[R_x:=\begin{pmatrix}1&0&0\\0&\cos\phi&-\sin\phi\\0&\sin\phi&\cos\phi\end{pmatrix}
		\quad R_y:=\begin{pmatrix}\cos\varphi&0&\sin\varphi\\0&1&0\\-\sin\varphi&0&\cos\varphi\end{pmatrix}.\]
Here $\phi\in [-\pi,\pi)$ and  $\varphi\in [-\pi/2,\pi/2)$.
Let $R_1:=(R_xR_y)^T,R_2:=(R_yR_x)^T$. We choose an open cover $\{U_i\}_{i=1}^6$ for the sphere $S^2$ with  open sets
\begin{align*}
U_1=&\{u\in S^2|u_3> \frac 12\},\quad U_2=\{u\in S^2|u_3< -\frac 12\},\alabel{eq partition}
\\
U_3=&\{u\in S^2|u_2> \frac 12\},\quad U_4=\{u\in S^2|u_2< -\frac 12\},
\\
U_5=&\{u\in S^2|u_1> \frac 12\},\quad U_6=\{u\in S^2|u_1< -\frac 12\}.
\end{align*}
Then there is a partition of unity subordinate to the open cover $\{U_i\}_{i=1}^6$; i.e. there exist
$\{\xi_i(u)\}_{i=1}^6$ with  $0\leq \xi_i\leq 1$ having support of $\xi$ in $U_i$ for each $i=1,\cdots 6$.
In particular, $\xi_1(u) =1 $ in $S^2\backslash(\cup_{i=2}^6 U_i)$, $\xi_1(u)\in [0,1]$ in $U_1\cap (\cup_{i=2}^6 U_i)$ and 0 otherwise. Then the rotational invariant energy density can be written as
		\begin{align*}\alabel{partition for W}
			W(\tilde u, \tilde \na \tilde  u)=&\sum_{i=1}^4\xi_i(u) W(R_1u, R_1\na  uR_1^T) +\sum_{j=5}^6\xi_j(u)W(R_2u, R_2\na  uR_2^T).
		\end{align*}
Without loss of generality, we compute $W(u,\na u)$ for the case where $\xi_1(u)=1$. The rotation is
	\begin{align*}
	R^T_1=R_xR_y=&\begin{pmatrix}\cos\varphi&0&\sin\varphi
	\\
	\sin\phi\sin\varphi&\cos\phi&-\sin\phi\cos\varphi
	\\
	-\cos\phi\sin\varphi&\sin\phi&\cos\phi\cos\varphi
	\end{pmatrix}.\alabel{Rotation}
	\end{align*}
	 Then
	\begin{align*}
	u_1(x_0)=\sin\varphi,\quad  u_2(x_0)=-\sin\phi\cos\varphi,\quad u_3(x_0)=\cos\phi\cos\varphi=\cos\phi\sqrt{1-u_1^2(x_0)}.
	\end{align*}
	Then
	\begin{align*}
	\sin\varphi=&u_1(x_0),&&\cos\varphi=\sqrt{u_2^2(x_0)+u_3^2(x_0)},
	\\
	\sin\phi=&\frac{-u_2(x_0)}{\sqrt{u_2^2(x_0)+u_3^2(x_0)}},&&\cos\phi=\frac{u_3(x_0)}{\sqrt{u_2^2(x_0)+u_3^2(x_0)}}.
	\end{align*}
	Therefore, at $x_0$
	\begin{align*}
	R_1(u)=\begin{pmatrix} \sqrt{u_2^2+u_3^2}&\frac{-u_1u_2}{\sqrt{u_2^2+u_3^2}} &\frac{-u_1u_3}{\sqrt{u_2^2+u_3^2}}
		\\
		0 &\frac{u_3}{\sqrt{u_2^2+u_3^2}}& \frac{-u_2}{\sqrt{u_2^2+u_3^2}}
		\\
		u_1& u_2& u_3
		\end{pmatrix}\alabel{Rotation u}.
	\end{align*}
Noting that $|\tilde u|^2=1$ and $\tilde u=(0,0,1)$ at $0$, we have at $0$
	\[\frac{\p \tilde u_3}{\p \tilde x_i} = -(\tilde u_1 \frac{\p \tilde u_1}{\p \tilde x_i}+  \tilde u_2 \frac{\p \tilde u_2}{\p \tilde x_i})=0 \] for all $i=1,2,3$. Then we have at $0$
	\begin{align*}
	|\tilde \na \tilde u|^2 =& |\tilde \na \tilde u_1|^2+|\tilde \na \tilde u_2|^2,
	\quad
	\tilde \na \cdot \tilde u =  \tilde \na_1\tilde u_1+\tilde \na_2\tilde u_2,
	\\
	 \curl\tilde u =&(-\tilde \na_3\tilde u_2,\tilde \na_3\tilde u_1,\tilde \na_1\tilde u_2-\tilde \na_2\tilde u_1),
	\\
	\tr(\tilde \na \tilde u)^2 =&|\tilde \na_1\tilde u_1|^2+|\tilde \na_2\tilde u_2|^2+2\tilde \na_1\tilde u_2\tilde \na_2\tilde u_1.
	\end{align*}
	We evaluate four terms of the Oseen-Frank potential at $0$
	\begin{align*}
	(\tilde \na \cdot \tilde u)^2=&(\tilde \na_1 \tilde u_1+\tilde \na_2 \tilde u_2)^2,
	\\
	(\tilde u\cdot \curl \tilde u)^2=&(-\tilde u_1\tilde \na_3\tilde u_2+\tilde u_2\tilde \na_3\tilde u_1+\tilde u_3(\tilde \na_1\tilde u_2-\tilde \na_2\tilde u_1))^2
	\\
	=& (\tilde \na_1\tilde u_2-\tilde \na_2\tilde u_1)^2 ,
	\\
	|\tilde u\times  \curl \tilde u|^2
	=&\(\tilde u_2(\tilde \na_1\tilde u_2-\tilde \na_2\tilde u_1)-\tilde u_3\tilde \na_3 \tilde u_1\)^2+\(-\tilde u_3\tilde \na_3\tilde u_2-\tilde u_1(\tilde \na_1\tilde u_2-\tilde \na_2\tilde u_1)\)^2
	\\
	&+\(\tilde u_1\tilde \na_3\tilde u_1+\tilde u_2\tilde \na_3\tilde u_2\)^2 =|\tilde \na_3\tilde u_1|^2+|\tilde \na_3\tilde u_2|^2,
	\\
	(\tr(\tilde \na \tilde u)^2-(\tilde \na \cdot \tilde u)^2)=&|\tilde \na_1\tilde u_1|^2+|\tilde \na_2\tilde u_2|^2+2\tilde \na_1\tilde u_2\tilde \na_2\tilde u_1-(\tilde \na_1 \tilde u_1+\tilde \na_2 \tilde u_2)^2
	\\
	=&2\tilde \na_1\tilde u_2\tilde \na_2\tilde u_1-2\tilde \na_1 \tilde u_1\tilde \na_2 \tilde u_2
	\end{align*}
	Substituting above identities into the density, we have
	\begin{align*}
	&\alabel{eq W}2W(\tilde u,\tilde \na \tilde u)=k_1(\div \tilde u)^2+k_2(\tilde u\cdot \curl \tilde u)^2+k_3|\tilde u\times \curl \tilde u|^2
\\&\qquad\,\qquad  +(k_2+k_4)(\tr(\na \tilde u)^2-(\div \tilde u)^2),\\
=&k_1(\tilde \na_1 \tilde u_1+\tilde \na_2 \tilde u_2)^2 +k_2(|\tilde \na_1\tilde u_2|^2+|\tilde \na_2\tilde u_1|^2)
	 +k_3(|\tilde \na_3\tilde u_1|^2+|\tilde \na_3\tilde u_2|^2)\\
	&+2k_4 \tilde \na_1\tilde u_2\tilde \na_2\tilde u_1-2(k_2+k_4)(\tilde \na_1 \tilde u_1\tilde \na_2 \tilde u_2)
	 \\
		=&\frac{2k_1-k_2-k_4}{2}(\tilde \na_1 \tilde u_1+\tilde \na_2 \tilde u_2)^2 +\frac{k_2+k_4}{2}(\tilde \na_1 \tilde u_1-\tilde \na_2 \tilde u_2)^2
		\\
		&+(k_2-|k_4|)(|\tilde \na_1\tilde u_2|^2+|\tilde \na_2\tilde u_1|^2) +|k_4|(\tilde \na_1\tilde u_2+\sign{k_4}\tilde \na_2\tilde u_1)^2
		\\
		&+k_3(|\tilde \na_3\tilde u_1|^2+|\tilde \na_3\tilde u_2|^2)
	\\
	=&\tilde\alpha|\tilde \na \tilde u|^2+\frac{2k_1-k_2-k_4-\tilde\alpha}{2}(\tilde \na_1 \tilde u_1+\tilde \na_2 \tilde u_2)^2 +\frac{k_2+k_4-\tilde\alpha}{2}(\tilde \na_1 \tilde u_1-\tilde \na_2 \tilde u_2)^2
	\\
	&+(k_2-|k_4|-\tilde\alpha)(|\tilde \na_1\tilde u_2|^2+|\tilde \na_2\tilde u_1|^2)  +(k_3-\tilde\alpha)(|\tilde \na_3\tilde u_1|^2+|\tilde \na_3\tilde u_2|^2)
	\\
	&+|k_4|(\tilde \na_1\tilde u_2+\sign{k_4}\tilde \na_2\tilde u_1)^2
	=\tilde\alpha|\tilde \na \tilde u|^2+2V(\tilde u,\tilde \na \tilde u),
	\end{align*}
where $\tilde \alpha$, which is defined in \eqref{eq tilde alpha}, is a positive constant due to the strong Ericksen condition (\ref {Er}). The term $V(u,\na u)$ can be written as
\begin{align}\label{V-form}
2V(\tilde u,\tilde \na \tilde u)&:=\frac{2k_1-k_2-k_4-\tilde\alpha}{2}( \div \tilde u)^2 +(k_3-\tilde\alpha) | \tilde u\times  \curl \tilde u|^2
	\\
	& +\frac{k_2+k_4-\tilde\alpha}{2}(\tilde \na_1 \tilde u_1-\tilde \na_2 \tilde u_2)^2+(k_2-|k_4|-\tilde\alpha)(|\tilde \na_1\tilde u_2|^2+|\tilde \na_2\tilde u_1|^2)\nonumber
	\\
	&+|k_4|(\tilde \na_1\tilde u_2+\sign{k_4}\tilde \na_2\tilde u_1)^2.\nonumber
\end{align}
Using \eqref{Rotation u} for the case of $\xi_1(u)=1$, we find
	\begin{align*}
		&\quad \tilde \na \tilde u = R\na uR^T\\&= \begin{pmatrix} \sqrt{u_2^2+u_3^2}&\frac{-u_1u_2}{\sqrt{u_2^2+u_3^2}} &\frac{-u_1u_3}{\sqrt{u_2^2+u_3^2}}
		\\
		0 &\frac{u_3}{\sqrt{u_2^2+u_3^2}}& \frac{-u_2}{\sqrt{u_2^2+u_3^2}}
		\\
		u_1& u_2& u_3
		\end{pmatrix}\na u \begin{pmatrix} \sqrt{u_2^2+u_3^2}& 0 &u_1
		\\
		 \frac{-u_1u_2}{\sqrt{u_2^2+u_3^2}}  &\frac{u_3}{\sqrt{u_2^2+u_3^2}}& u_2
		\\
		\frac{-u_1u_3}{\sqrt{u_2^2+u_3^2}} &\frac{-u_2}{\sqrt{u_2^2+u_3^2}}  & u_3
		\end{pmatrix}.
		\end{align*}
A direct calculation yields
\begin{align*}
(R\na u)_{1,1}=& \sqrt{u_2^2+u_3^2}\na_1 u_1-\frac{u_1(u_2\na_1 u_2+u_3\na_1u_3)}{\sqrt{u_2^2+u_3^2}} =\frac{\na_1u_1}{\sqrt{u_2^2+u_3^2}},
\\(R\na u)_{1,2}=& \sqrt{u_2^2+u_3^2}\na_2 u_1-\frac{u_1(u_2\na_2 u_2+u_3\na_2u_3)}{\sqrt{u_2^2+u_3^2}} =\frac{\na_2u_1}{\sqrt{u_2^2+u_3^2}},
\\(R\na u)_{1,3}=& \sqrt{u_2^2+u_3^2}\na_3 u_1-\frac{u_1(u_2\na_3 u_2+u_3\na_3u_3)}{\sqrt{u_2^2+u_3^2}} =\frac{\na_3u_1}{\sqrt{u_2^2+u_3^2}},
\\
(R\na u)_{2,1}=& \frac{u_3\na_1u_2-u_2\na_1u_3}{\sqrt{u_2^2+u_3^2}},\quad (R\na u)_{2,2}=\frac{u_3\na_2u_2-u_2\na_2u_3}{\sqrt{u_2^2+u_3^2}},
\\
(R\na u)_{2,3}=&\frac{u_3\na_3u_2-u_2\na_3u_3}{\sqrt{u_2^2+u_3^2}}.
\end{align*}
Note that $u_1^2 \leq \frac  {3|u|^2}4 $ for the case of $\xi_1(u)=1$.  Then it yields
\begin{align*}
\tilde\na_1 \tilde u_1
=&\na_1u_1-\frac{u_1u_2\nabla_2u_1+u_1u_3\nabla_3 u_1} {|u|^2- u_1^2}|u|,
\\
\tilde\na_2 \tilde u_2=&\frac{u_3^2}{ u_2^2+u_3^2}\na_2u_2-\frac{u_3u_2}{ u_2^2+u_3^2}(\na_2u_3+\na_3u_2)+\frac{u_2^2}{ u_2^2+u_3^2}\na_3u_3\\
=&\na_2u_2+\na_3u_3+ \frac{u_1u_2\nabla_2u_1+u_1u_3\nabla_3u_1 }{|u|^2- u_1^2 }|u|,\\
\tilde\na_1 \tilde u_2=&u_3\na_2u_1-u_2\na_3u_1+ \frac{u_1u_2u_3}{ u_2^2+u_3^2}(\na_3u_3-\na_2 u_2)-\frac{u_1u_3^2}{ u_2^2+u_3^2}\na_2u_3
+\frac{u_1u_2^2}{ u_2^2+u_3^2}\na_3 u_2\\
=&(1+ \frac {u_1^2}{|u|^2-u_1^2})  (u_3\na_2u_1-u_2\na_3u_1)=\frac {|u|^2}{|u|^2-u_1^2}  (u_3\na_2u_1-u_2\na_3u_1),
\\
\tilde\na_2 \tilde u_1 =&u_3\na_1u_2-u_2\na_1u_3+\frac{u_1u_2u_3}{ u_2^2+u_3^2}(\na_3u_3-\na_2 u_2)+\frac{u_1u_2^2 }{ u_2^2+u_3^2}\na_2u_3-\frac{u_1u_3^2}{ u_2^2+u_3^2}\na_3u_2\\
=& (u_3\na_1u_2-u_2\na_1u_3)+u_1(\na_2u_3- \na_3u_2)+\frac {u_1^2(u_3\nabla_2u_1-u_2\nabla_3 u_1) }{|u|^2-u^2_1}.
\end{align*}
Substituting the above identities into (\ref{V-form}), for the case of $\xi_1(u)=1$, we see that
\begin{align*}\alabel{EV-form}
&2V(u,\na u)=2V(\tilde u,\tilde \na \tilde u)=\frac{2k_1-k_2-k_4-\tilde\alpha}{2}(  \div  u)^2 +(k_3-\tilde\alpha) |  u\times   \curl  u|^2
\\
& +\frac{k_2+k_4-\tilde\alpha}{2}\(\na_1u_1-\na_2u_2-\na_3u_3-\frac{2u_1|u|(u_2\nabla_2u_1 + u_3\nabla_3 u_1)} {|u|^2- u_1^2}\)^2
\\
&+(k_2-|k_4|-\tilde\alpha)\(\frac {|u|^2}{|u|^2-u_1^2}  (u_3\na_2u_1-u_2\na_3u_1)\)^2
\\
&+(k_2-|k_4|-\tilde\alpha)\( (u_3\na_1u_2-u_2\na_1u_3)+u_1(\na_2u_3- \na_3u_2)+\frac {u_1^2(u_3\nabla_2u_1-u_2\nabla_3 u_1) }{|u|^2-u^2_1}\)^2
\\
&+|k_4|\Bigg(\sign{k_4} \Big((u_3\na_1u_2-u_2\na_1u_3)+u_1(\na_2u_3- \na_3u_2)\Big)
\\
&\qquad \qquad+\frac {|u|^2(1+\sign{k_4} )}{|u|^2-u_1^2} (u_3\na_2u_1-u_2\na_3u_1) \Bigg)^2.
\end{align*}
Note that  \eqref{EV-form} is the form of $V(u,\na u)$ for $\xi_1=1$. One can repeat the calculation for the second rotation $R_2$ in \eqref{partition for W}. To extend \eqref{EV-form} to $u\in \R^3$, we define $\xi_i$ for $\frac{u}{|u|}$ similarly to \eqref{eq partition}. Thus we prove the required   result. Then we find that $V(u,  \na u)$  is quadratic in $\na u$ and $0\leq V(  u,  \na u)\leq C(1+|u|^2)|\na u|^2$ for all $u\in \R^3$, which implies that for all $u\in \R^3$, we have
	\begin{align*}
	&W( u, \na  u)=\frac{\tilde\alpha}2| \na  u|^2+V( u,  \na  u )\geq \frac{\tilde\alpha}2| \na  u|^2,\\
&|V_u(u,\nabla u)|\leq C(1+|u|)|\nabla u|^2.
	\end{align*}
	
\end{proof}
\begin{remark}If the Frank constants satisfy that $min\{k_1,k_2,k_3\}\geq k_2+k_4= \tilde \alpha >0$ and $k_4<0$ as in  \cite[p.~551]{HKL} (see also \cite[p.~467]{GMS}). Then the equation \eqref{EV-form} becomes
	\begin{align*}\alabel{eq Giaq}
  2V(u,\na u)
	=(k_1-\tilde \alpha)(\div  u)^2 +(k_2-\tilde \alpha)(u\cdot \curl u)^2+(k_3-\tilde \alpha)|  u\times   \curl  u|^2.
	\end{align*}
Thus the form $W(u,\na u)$ with the form \eqref{eq Giaq} includes the cases in \cite[p.~551]{HKL} and  \cite[p.~467]{GMS}.
\end{remark}

Next, we will prove Theorem \ref {Theorem 2} by using Lemma \ref{Lemma 3.1}.

Using the form $Q= s_+(u\otimes u-\frac13 I)$ for $u\in S^2$, it can be seen that
	\begin{align*}
	&s_+^{-2}\sum_{i, j,k=1}^3\(\frac{\p Q_{ij}}{\p x_k}\)^2= \sum_{i,j,k=1}^3(u_j\na_ku_i+u_i\na_ku_j)^2 =2|\na u|^2,
	\\
	&s_+^{-2}\sum_{j,k=1}^3\frac{\p Q_{ij}}{\p x_j}\frac{\p Q_{ik}}{\p x_k}=\sum_{j,k=1}^3 (u_j\na_j u_i+u_i\na_ju_j)(u_k\na_ku_i+u_i\na_k u_k)
	\\
	=&(\na\cdot u)^2+\sum_i[( u\cdot \na) u_i]^2=(\na\cdot u)^2+|u\times \curl u|^2,
	\\
	&s_+^{-2}\frac{\p Q_{ik}}{\p x_j}\frac{\p Q_{ij}}{\p x_k}= (u_k\na_ju_i+u_i\na_ju_k)(u_j\na_ku_i+u_i\na_ku_j)
	\\
	=&\tr (\na u)^2+\sum_i[( u\cdot \na) u_i]^2=\tr (\na u)^2+|u\times \curl u|^2,
	\\
	&s_+^{-3} Q_{lk}\frac{\p Q_{ij}}{\p x_l}\frac{\p Q_{ij}}{\p x_k}=2|u\times \curl u|^2-\frac 23|\na u|^2.
	\end{align*}
	Here the last equality is from \eqref{Q 3}.
	Substituting above identities into the form $f_E(Q,\nabla Q)$, we have
	\begin{align}\label{L1}
	f_E(Q,\nabla Q) =&s_+^2L_1|\na u|^2
	+\frac{s_+^2L_2}{2}((\na\cdot u)^2+|u\times \curl u|^2)
	\\
	&+\frac{s_+^2L_3}{2}(\tr (\na u)^2+|u\times \curl u|^2)\nonumber\\
&+s_+{^3}L_4(|u\times \curl u|^2-\frac 13|\na u|^2)\nonumber
	\nonumber\\
	=&(s_+^2L_1-\frac{s_+^3}{3}L_4)|\na u|^2+ \frac{s_+^2}{2}L_2(\na \cdot u)^2\nonumber
	\\
	&+(\frac{s_+^2}{2}L_2+\frac{s_+^2}{2}L_3+s_+{^3}L_4)|u\times \curl u|^2+\frac{s_+^2}{2}L_3\tr(\na u)^2.\nonumber
	\end{align}
For each $u\in S^2$,  note that
\begin{align*}
	|\na u|^2= \tr(\na u)^2+|\curl u|^2,\quad
 |\curl u|^2=(u\cdot \curl u)^2+|u\times \curl u|^2.
\end{align*}
Using the above identities, we have
	\begin{align}\label{W1}
	2W(u,\na u)  &=k_1(\na \cdot u)^2+k_2(u\cdot \curl u)^2+k_3|u\times \curl u|^2\\
&\quad +(k_2+k_4)(\tr(\na u)^2-(\na \cdot u)^2)\nonumber
\\&=k_2|\na u|^2+(k_1-k_2-k_4)(\na \cdot u)^2\nonumber\\
&\quad +(k_3-k_2) |u\times \curl u|^2 +k_4 \tr(\na u)^2.\nonumber
	\end{align}
	
	Similarly to \cite{MGKB}, comparing (\ref{L1}) with (\ref{W1}), we find that for each $Q\in S_*$,  $f_E(Q,\nabla Q)=W(u,\nabla u) $ is true when
	 \begin{align}
\begin{cases}\label{Relation}
	k_1&= 2s_+^{2}L_1+ s_+^2 L_2+s_+^2L_3-\frac{2s_+^{3}}{3}L_4\\
	k_2&=2s_+^2L_1-\frac{2s_+^{3}}{3}L_4\\
	k_3&=2s_+^2L_1+ s_+^2 L_2+ s_+^2 L_3+\frac{4s_+^{3}}{3}L_4\\
	k_4&= s_+^2 L_3
	\end{cases}\Leftrightarrow
\begin{cases}
	L_1&=-\frac 16s_+^{-2}k_1+\frac 12s_+^{-2}k_2+\frac 16s_+^{-2}k_3\\
	L_2&=s_+^{-2}k_1-s_+^{-2}k_2-s_+^{-2}k_4\\
	L_3&=s_+^{-2}k_4\\
	L_4&=-\frac12 s_+^{-3}k_1+\frac 12s_+^{-3}k_3.
	\end{cases}
\end{align}
	Using Lemma \ref {Lemma 3.1}, the density $W(u,\nabla u)$ has a lower bound if the coefficients $k_1, \cdots, k_4$ satisfy the strong Ericksen condition (\ref {Er}). Using the relation (\ref{Relation}) between $k_i$ and $L_i$ with $i=1, ..., 4$ , the strong Ericksen condition (\ref {Er}) is equivalent to that
\begin{align*}
	 & L_1-\frac{1}{2}|L_3|> \frac{s^+}{3}L_4,\quad  L_1+\frac{1 }{2}L_2+\frac{1}{2}L_3+\frac{2s_+}{3}L_4>0,\\&   L_1+ L_2+ \frac 12 L_3 > \frac{s_+}{3}L_4.
	\end{align*}
Now we prove Theorem \ref {Theorem 2}.
\begin{proof}
	 For any $Q(u)= s_+(u\otimes u-\frac13 I)$ with $u\in S^2$,  note that
\[Q(-u)= s_+(-u\otimes -u-\frac13 I)=Q(u),\quad f_E(Q,\nabla Q)=W(u,\nabla u). \]
Therefore, we can assume that $u=(u_1, u_2, u_3)$ with $u_1\geq 0$. For a $Q\in S_*$, there is a unique $u\in S^2$ such that
\begin{align}\label {3.12}
&u_1=\sqrt{|s_+^{-1}Q_{11}+\frac 13   |}, \;  u_2=\sign{Q_{12}} \sqrt{|s_+^{-1}Q_{22}+\frac 13   |},\\
 &u_3=\sign{Q_{13}} \sqrt{|s_+^{-1}Q_{33}+\frac 13   |}.\nonumber
 \end{align}
Using the fact that $|u|^2=1$, a direct calculation yields
\[\nabla_ku_i =u_j\nabla_k (u_i u_j)=s_+^{-1}(u_1\nabla_kQ_{i1}+u_2\nabla_kQ_{i2}+u_3\nabla_kQ_{i3})=s_+^{-1}u_j\nabla_kQ_{ij},
 \]
 which implies
 \begin{align}\label {3.14}
u_l\nabla_ku_i &=\sum_js_+^{-1}(s_+^{-1}Q_{lj}+\frac13\delta_{lj})\nabla_kQ_{ij}=\sum_js_+^{-2}(Q_{lj}+\sqrt{\frac16}|Q|\delta_{lj})\nabla_kQ_{ij}.
 \end{align}
 Here we used the fact that $|Q|=\sqrt{\frac 2 3}s_+$.
Let $S:=\{Q\in S_0 : |Q|=1\}$ be the unit sphere of $S_0$.
 By Cauchy's inequality, we have
\[|Q_{11}|+|Q_{22}|+|Q_{33}|\leq \sqrt 3 (|Q_{11}|^2+|Q_{22}|^2+|Q_{33}|^2)^{1/2}\leq \sqrt 3 |Q|.\]
Consider
\begin{align*}\alabel{PoU Q}
&U_1=\left \{Q\in S: \,|Q_{11}|<\frac  {\sqrt 3} {3}|Q|\right\},\quad U_2=\left\{Q\in S:  \,|Q_{22}|<  \frac {\sqrt 3} {3}|Q|\right\},\\
&U_3=\left\{Q\in S:\, |Q_{33}|<  \frac  {\sqrt 3} {3}|Q|\right\}.
\end{align*}
Since there is one $i$ such that $|Q_{ii}|<\frac  {\sqrt 3} {3}|Q|$,
then $\{U_i\}_{i=1}^3$ is an open cover of $S$ and
let $\{\xi_i\}_{i=1}^3$ be a smooth partition of unity subordinate to the open cover such
that  $\sum_{i=1}^3\xi_i=1$  and $0\leq \xi_i\leq 1$ in $S$, $\xi_i\in C_0^{\infty}(U_i)$ and $\xi_i=1$ in $V_i$, where $V_i$ is an open subset of $U_i$ and $\{V_i\}_{i=1}^3$ is also an open cover of $S$.
Then for each $Q\in S_0$,  we have
								\begin{align*}\alabel{portion for V}
								V(\tilde Q,\tilde\na \tilde Q)=(\xi_1(\frac Q{|Q|}) +\xi_2( \frac Q{|Q|}))V(R_1Q, R_1\na  QR_1^T) +\xi_3(\frac Q{|Q|}) V(R_2Q, R_2\na  QR_2^T).
								\end{align*}
								
When $Q\in S_*$, $Q=s_+ (u\otimes u-\frac 13 I)$  with $u\in S^2$.								Without of generality, we only consider the case that $\frac Q{|Q|}\in U_1$; i.e.  $|Q_{11}|<\frac  {\sqrt 3} {3}|Q|$.
Noting that $|Q|=\sqrt{\frac 2 3}s_+|u|^2$, we have
\[|u|^2-u_1^2=\sqrt{\frac32}s^{-1}_+|Q|-(s^{-1}_+Q_{11}+\frac13)=s^{-1}_+\(\sqrt{\frac 23}|Q|-_+Q_{11}\)>\frac {\sqrt 2-1} {s_+}|Q_{11}|.\]
Since $u\in S^2$, it follows from  \eqref{3.14} that
				\begin{align*}\alabel{div u into Q}
				&I_1:=(\div u)^2=\sum_i (u_i\div u)^2=\sum_i (s_+^{-1}(s_+^{-1}Q_{ij}+\frac13\delta_{ij})(\na\cdot Q_j))^2.
\end{align*}	
Let $Q_i$  be the $i$-th column of the $Q$ matrix. One can verify from \eqref {EV-form} that
		$$(\curl u)_i = \sum_js_+^{-1}u_j(\curl Q_{j})_i.$$Then we find			
			\begin{align*}\alabel{u into Q I2}
I_2:=	&|u\times \curl u|^2=|u\times (s_+^{-1}u_j(\curl Q_{j}))|^2=\left|\sum_js_+^{-1}(s_+^{-1}Q+\frac13I)_j\times\curl Q_{j}\right|^2.\end{align*}

Using \eqref{3.14} again, we rewrite the third and fourth terms of \eqref{EV-form} as
				\begin{align*}
				&I_3:=\sum_i u_i^2\(\na_1u_1-\na_2u_2-\na_3u_3-\frac{2u_1u_2\nabla_2u_1+2u_1u_3\nabla_3 u_1} {|u|^2- u_1^2}\)^2
				\\
				&=\sum_{i,j}s_+^{-4}(Q_{ij}+\sqrt{\frac16}|Q|\delta_{ij})^2\(\na_1Q_{1j}-\na_2Q_{2j}-\na_3Q_{3j} -\frac{2Q_{12}\na_2Q_{1j}-2Q_{13}\na_3Q_{1j}}{\sqrt{\frac 23} |Q|-Q_{11}} \)^2,
				\\
				&I_4:=\(\frac {|u|^2}{|u|^2-u_1^2}  (u_3\na_2u_1-u_2\na_3u_1)\)^2
				\\
				&=\frac {3|Q|^2} {2s_+^{4}}\sum_{i} \(\frac{ (Q_{i3}+\sqrt{\frac16}|Q|\delta_{i3})\na_2Q_{1i}-(Q_{i2}+\sqrt{\frac16}|Q|\delta_{i2}) \na_3Q_{1i}}{ \sqrt{\frac 23} |Q|-Q_{11}} \)^2.
				\end{align*}
				 We rewrite the fifth term  of \eqref{EV-form} into
				\begin{align*}
				&I_5:=\( (u_3\na_1u_2-u_2\na_1u_3)+u_1(\na_2u_3- \na_3u_2)+\frac {u_1^2(u_3\nabla_2u_1-u_2\nabla_3 u_1) }{|u|^2-u^2_1}\)^2
				\\
				&=\sum_{i}s_+^{-4}\((Q_{i3}+\sqrt{\frac16}|Q|\delta_{i3})\na_1Q_{2i}-(Q_{i2}+\sqrt{\frac16}|Q|\delta_{i2}) \na_1Q_{3i}\right.
				\\
				&\left.+(Q_{i1}+\sqrt{\frac16}|Q|\delta_{i1})\Big((\na_2Q_{3i}- \na_3Q_{2i})+\frac{Q_{13}\na_2Q_{1i}-Q_{12}\na_3Q_{1i}}{\sqrt{\frac 23} |Q|-Q_{11}} \Big)\)^2.
				\end{align*}
				Finally, we write the last term in (3.8) as
				\begin{align*}
				I_6  =&\sum_{i}s_+^{-4}\left[\left.\frac{(1+\sign{L_3})\sqrt{\frac 32} |Q|}{\sqrt{\frac 23} |Q|-Q_{11}} \right.\right((Q_{i3}+\sqrt{\frac16}|Q|\delta_{i3})\na_2Q_{1i}
								\\
								&-(Q_{i2}+\sqrt{\frac16}|Q|\delta_{i2}) \na_3Q_{1i}\left)+\sign{L_3}\((Q_{i3}+\sqrt{\frac16}\right.|Q|\delta_{i3})\na_1Q_{2i}\right.
								\\
								&\left.\left.-(Q_{i2}+\sqrt{\frac16}|Q|\delta_{i2}) \na_1Q_{3i}+(Q_{i1}+\sqrt{\frac16}|Q|\delta_{i1})(\na_2Q_{3i}- \na_3Q_{2i})\)\right]^2.
				\end{align*}
				Substituting the identities of $I_1$,..., $I_6$ into the equation \eqref{EV-form}, we have
				\begin{align*}\alabel{eq V(Q) form}
				\quad V(Q,\na Q) =&\frac{L_1+  L_2+ \frac 12 L_3- \frac{s_+}{3}L_4-\alpha}{4}I_1 +\frac{(L_1+\frac{1 }{2}L_2+\frac{1}{2}L_3+\frac{2s_+}{3}L_4-\alpha) }{2s_+^{2}}I_2
\\
& +\frac{L_1-\frac{s_+}{3}L_4+\frac{1}{2}L_3-\alpha}{2}I_3+\frac{(L_1-\frac{s_+}{3}L_4-\frac{1}{2}|L_3|- \alpha)}{2}I_4\\
&+\frac{(L_1-\frac{s_+}{3}L_4-\frac{1}{2}|L_3|- \alpha)}{2}I_5+\frac{|L_3|}{2} I_6.
				\end{align*}
				
				Repeat this process for the remaining cases for $\xi_i$ in \eqref{portion for V} and use the relation \eqref{eq alpha} for $\alpha$. We see that
					\begin{align*}\alabel{Full W(Q,na Q)}
					f_E(Q,\nabla Q)&=\frac   \alpha 2   |\na Q|^2+V(Q,\na Q),
						\end{align*}
						where one can find an explicit form of $V(Q,\na Q)$ that is a sum of square terms and quadratic in  $\na Q$ satisfying
						\begin{align*}
						V(  Q,  \na Q)\leq C(1+|Q|^2) |\nabla Q|^2, \quad |V_Q(  Q,  \na Q)|\leq C(1+|Q|)|\nabla Q|^2.
						\end{align*}
						This completes a proof.
		\end{proof}
As a consequence of Theorem \ref {Theorem 2}, we give a proof of Corollary 1.
 \begin{proof}
We first note that
\[ (u\cdot \curl u)^2=(s_+^{-1}u_iu_j(\curl Q_j)_i)^2=\(\sum_{i,j} s_+^{-1}(s_+^{-1}Q_{ij}-\frac13\delta_{ij})(\curl Q_j)_i\)^2.\]
Using \eqref{Relation}, \eqref{div u into Q} and \eqref{u into Q I2}, we   write \eqref{eq Giaq} as
			\begin{align*}
		& 2W(Q,\na Q) =  \tilde \alpha |\na Q|^2+2V(Q,\na Q)
		\\
		=& s_+^{-2} \tilde \alpha |\na Q|^2+(k_1-\tilde \alpha)\sum_k\left(s_+^{-1}\sum_{i,j}(s_+^{-1}Q_{kj}+\frac13 \delta_{kj})\nabla_iQ_{ij}\right)^2
		\\
		&+  (k_2-\tilde \alpha)\( \sum_{i,j}s_+^{-1}(s_+^{-1}Q_{ij}+\frac13 \delta_{ij})(\curl Q_j)_i\)^2
		\\
		&+(k_3-\tilde \alpha)\left|\sum_{j}s_+^{-1}(s_+^{-1}Q+\frac 13I)_j\times\curl Q_{j}\right|^2
		\\
		=& \alpha |\na Q|^2+(2 L_1+L_2+L_3-\frac{2s_+}{3}L_4-\alpha)\sum_k\left(\sum_{i,j}(s_+^{-1}Q_{kj}+\frac13 \delta_{kj})\nabla_iQ_{ij}\right)^2
		\\
		&+  (2L_1-\frac{2s_+}{3}L_4-\alpha)\( \sum_{i,j}(s_+^{-1}Q_{ij}+\frac13 \delta_{ij})(\curl Q_j)_i\)^2
		\\
		&+(2L_1+L_2+L_3+\frac{4s_+}{3}L_4-\alpha)\left|\sum_{j}(s_+^{-1}Q+\frac 13I)_j\times\curl Q_{j}\right|^2 .
		\end{align*}
	\end{proof}
\section{Proof of Theorem 3}
\begin{lemma}
	If $Q$ is a minimizer of $\tilde E_{L}$ in $W^{1,2}_{Q_0}(\Omega; S_0)$, it   satisfies
	\begin{align*}
	&-\tilde\alpha  \Delta Q_{ij}-\frac 1 2  \na_k (V_{p^k_{ij}}+ V_{p^k_{ji}})+\frac 1 3 \delta_{ij}\sum_l \nabla_k V_{p^k_{ll}}  +\frac 1 2 (V_{Q_{ij}}  +V_{Q_{ji}})-   \frac 1 3 \delta_{ij}\sum_l V_{Q_{ll}}  \\& + \frac 1 L \(-aQ_{ij}- b  (Q_{ik}Q_{kj}-\frac 13\delta_{ij}\tr(Q^2) )+cQ_{ij}\tr(Q^2)\)=0
	\end{align*}
	in the weak sense.	
\end{lemma}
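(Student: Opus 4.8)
The plan is to recover the stated identity as the weak Euler--Lagrange equation of the modified functional $\tilde E_L$ from \eqref{MDLG}; it is precisely equation \eqref{MDEL} written with the opposite overall sign. The only step that is not routine is justifying differentiation under the integral sign, which is exactly what the cut-off $\eta$ in \eqref{MD} was introduced to make possible: for the unmodified density $f_E$ the cubic growth in $Q$ of the $L_4$ term would break the required estimates.

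First I would fix a test field $\Phi\in C^\infty_0(\Omega;S_0)$ and use the competitors $Q+t\Phi\in W^{1,2}_{Q_0}(\Omega;S_0)$ for $t\in\R$. From \eqref{MD} one has $\frac{\alpha}{2}|\nabla Q|^2\le \widetilde f_E(Q,\nabla Q)\le C|\nabla Q|^2$ together with the pointwise bounds $|\widetilde f_{E,p^k_{ij}}|\le C|\nabla Q|$ and $|\widetilde f_{E,Q_{ij}}|\le C|\nabla Q|^2$, the constants independent of $Q$ because $\eta(|Q|)$ truncates the growth of $V$ and of its $Q$-derivative. These bounds dominate the difference quotients $t^{-1}\big(\widetilde f_E(Q+t\Phi,\nabla Q+t\nabla\Phi)-\widetilde f_E(Q,\nabla Q)\big)$ uniformly for $0<|t|\le 1$ by a fixed $L^1(\Omega)$ function, while the bulk part $\frac1L f_B$ is smooth and polynomially bounded, hence harmless on $\mathrm{supp}\,\Phi$. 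Thus $t\mapsto\tilde E_L(Q+t\Phi;\Omega)$ is differentiable at $t=0$, and minimality of $Q$ gives
\[
0=\left.\frac{\d}{\d t}\tilde E_L(Q+t\Phi;\Omega)\right|_{t=0}
=\int_\Omega\Big(\widetilde f_{E,p^k_{ij}}\nabla_k\Phi_{ij}+\widetilde f_{E,Q_{ij}}\Phi_{ij}+\frac1L (f_B)_{Q_{ij}}\Phi_{ij}\Big)\,\d x .
\]

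Next I would insert $\widetilde f_E=\frac{\alpha}{2}|\nabla Q|^2+\tilde V$: the quadratic term contributes $\alpha\,\nabla_kQ_{ij}\nabla_k\Phi_{ij}$, the term $\tilde V$ contributes $\tilde V_{p^k_{ij}}\nabla_k\Phi_{ij}+\tilde V_{Q_{ij}}\Phi_{ij}$, and the bulk derivative is $-aQ_{ij}-b\big(Q_{ik}Q_{kj}-\frac13\delta_{ij}\tr(Q^2)\big)+cQ_{ij}\tr(Q^2)$, the summand $-\frac13\delta_{ij}\tr(Q^2)$ arising because $\partial_{Q_{ij}}\tr(Q^3)=3(Q^2)_{ij}$ is not traceless whereas $\Phi$ is. Integrating by parts in the two gradient terms (legitimate since $\Phi\in C^\infty_0$ and $\widetilde f_{E,p^k_{ij}}\in L^2$, with $\nabla_k\widetilde f_{E,p^k_{ij}}$ read distributionally) one reaches $\int_\Omega G_{ij}\Phi_{ij}\,\d x=0$ for every $\Phi\in C^\infty_0(\Omega;S_0)$, where $G$ denotes the matrix field of the claimed equation before symmetrisation.

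Finally, since $\Phi$ ranges over symmetric traceless fields only, the vanishing of $\int_\Omega G_{ij}\Phi_{ij}\,\d x$ forces the symmetric trace-free part of $G$ to vanish weakly, i.e.\ $\frac12(G_{ij}+G_{ji})-\frac13\delta_{ij}\sum_l G_{ll}=0$; as $\Delta Q$, $aQ$, $cQ\tr(Q^2)$ and the $b$-term (as written) already lie in $S_0$, only the $\tilde V$-derivative terms get symmetrised in $(i,j)$ and de-traced, which reproduces exactly the stated identity. I expect the differentiability step of the second paragraph to be the main point: it fails for $f_E$ and is restored by the truncation defining $\widetilde f_E$, which is precisely the mechanism making an Euler--Lagrange equation available in $W^{1,2}(\Omega;S_0)$ for the modified theory.
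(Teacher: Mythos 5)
Your proposal is correct and takes essentially the same route as the paper: an outer variation $Q+t\Phi$ with $\Phi\in C^\infty_0(\Omega;S_0)$, differentiation at $t=0$, integration by parts, and then symmetrisation and de-tracing of the integrand because $\Phi$ is symmetric and traceless. The extra care you devote to justifying differentiation under the integral sign via the truncation $\eta$ is a genuine refinement the paper leaves implicit, and your trace-removal bookkeeping (which yields $+\frac13\delta_{ij}\sum_l\nabla_k V_{p^k_{ll}}$, matching the lemma's statement) also quietly corrects a sign slip in the paper's displayed computation.
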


\begin{proof}
For any test function $\phi\in C^\infty_0(\Omega; S_0)$, consider $Q_t:=Q+t\phi$ for $t\in \R$.
Then for all $\phi\in C^\infty_0(\Omega; S_0)$, we calculate
\begin{align*}
&\left.\int_\Omega \frac{\d}{\d t}\(\tilde f_{E,1}(Q_t,\na Q_t)+\frac 1L\tilde f_B(Q_t)\)\right|_{t=0}\d x
\\
=&\int_\Omega \tilde\alpha  \frac{\p Q_{ij}}{\p x_k}\frac{\p \phi_{ij}}{\p x_k}+ V_{p^k_{ij}} \frac{\p \phi_{ij}}{\p x_k}+V_{Q_{ij}} \phi_{ij}\d x
\\
&+ \frac 1 L\int_\Omega  -aQ_{ij}\phi_{ij}-  b  Q_{ik}Q_{kj}\phi_{ij} +c(Q_{ij}\tr(Q^2)\phi_{ij})\d x
\\
=&\int_\Omega\( -\tilde\alpha  \Delta Q_{ij} -\frac 1 2\frac{\p }{\p x_k}(V_{p^k_{ij}} + V_{p^k_{ji}})
+\frac 12 (V_{Q_{ij}} +V_{Q_{ji}}) \)\phi_{ij}\d x \\
&+ \frac 1 L\int_\Omega  \(-aQ_{ij}- b  Q_{ik}Q_{kj} +cQ_{ij}\tr(Q^2)\)\phi_{ij}\d x\\
=& \int_\Omega\( -\tilde\alpha  \Delta Q_{ij} -\frac 1 2 \nabla_k(V_{p^k_{ij}}+  V_{p^k_{ji}}) -\frac 1 3 \delta_{ij}\sum_l \nabla_k V_{p^k_{ll}}
  \)\phi_{ij}\d x
\\
& +\int_\Omega\(\frac 12 (V_{Q_{ij}} +V_{Q_{ji}}) - \frac 1 3 \delta_{ij}\sum_l V_{Q_{ll}}  \)\phi_{ij}\d x
\\
& + \frac 1 L\int_\Omega  \(-aQ_{ij}- b  (Q_{ik}Q_{kj}-\frac 1 3 \delta_{ij}\tr(Q^2) ) +cQ_{ij}\tr(Q^2)\)\phi_{ij}\d x
=  0,
\end{align*}
where we used the fact that $\phi$ is traceless.
 This proves our claim.
\end{proof}

Then we will show that
\begin{lemma}\label{lem uniform bound}
Let $Q_L$ be a weak solution to the equation (\ref{MDEL}) with the boundary value $Q_0\in W^{1,2}(\Omega ; S_*)$. Then,  $|Q_L|\leq M+1$ for a sufficient large $M$.
\end{lemma}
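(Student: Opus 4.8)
The plan is to prove the $L^\infty$-bound by a maximum-principle argument applied to the scalar function $|Q_L|^2$, exploiting the fact that the bulk potential $f_B(Q)$ has a coercive growth in $|Q|$ while the modified elastic density $\widetilde f_E(Q,\nabla Q)$ is controlled from above by $C|\nabla Q|^2$ for $|Q|$ large, since the cut-off $\eta(|Q|)$ kills the $V$-term there. First I would record the Euler--Lagrange system \eqref{MDEL} satisfied by $Q_L$ in the weak sense and test it against an admissible variation supported where $|Q_L|$ is large. The natural test function is $\varphi = (|Q_L|^2 - (M+1)^2)_+\, Q_L$, which is traceless, symmetric, in $W^{1,2}_0$ on the set $\{|Q_L|>M+1\}$, and vanishes on $\partial\Omega$ because $Q_0\in W^{1,2}(\Omega;S_*)$ so that $|Q_0|^2 = \tfrac23 s_+^2 < (M+1)^2$ for $M$ large.

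The key steps, in order, are: (i) On the set $A:=\{x\in\Omega : |Q_L(x)|>M+1\}$, note $\eta(|Q_L|)=0$ and hence, by the definition \eqref{MD}, $\widetilde V(Q_L,\nabla Q_L)=0$ and all derivatives $\widetilde V_{p^k_{ij}}$, $\widetilde V_{Q_{ij}}$ that appear in \eqref{MDEL} vanish on $A$ as well (here one uses that $\eta\equiv 0$ on $[M+1,\infty)$, so a whole neighbourhood in $|Q|$ is flat — strictly one should localize at the threshold $M$ versus $M+1$, choosing the test function's cut-off level $M+1$ inside the region where $\eta$ is already identically $0$, namely using level $M+\tfrac12$ is safer, but the principle is the same). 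Thus on $A$ the equation reduces to the clean relation $\alpha\,\Delta Q_{L,ij} = \tfrac1L(-aQ_{L,ij} - b(Q_{L,ik}Q_{L,kj}-\tfrac13\delta_{ij}\tr Q_L^2) + cQ_{L,ij}\tr Q_L^2)$. (ii) Test with $\varphi$ above and integrate by parts: the elastic term yields $-\alpha\int_A |Q_L|^2>(M+1)^2\big(|\nabla Q_L|^2\,(\dots) + \tfrac12|\nabla |Q_L|^2|^2\big)\le 0$ after discarding the manifestly nonnegative gradient contributions, while the bulk term contributes $\tfrac1L\int_A (|Q_L|^2-(M+1)^2)_+\big(-a|Q_L|^2 - b\,\tr Q_L^3 + c|\tr Q_L^2|^2/\!\dots\big)$ — more precisely $-a|Q_L|^2 - b\,Q_{L,ij}Q_{L,ik}Q_{L,kj} + c|Q_L|^2\tr Q_L^2$. (iii) Use the algebraic fact (Ball--Majumdar, or a direct eigenvalue computation) that for symmetric traceless $Q$ one has $|\tr Q^3|\le \tfrac{1}{\sqrt6}|Q|^3$ and $\tr Q^2 = |Q|^2$, so the bulk bracket is bounded below by $c|Q|^4 - \tfrac{b}{\sqrt6}|Q|^3 - a|Q|^2$, which is strictly positive once $|Q| > R_0$ for an explicit $R_0 = R_0(a,b,c)$; hence choosing $M+1 \ge R_0$ makes the bulk integral strictly positive unless $A$ has measure zero.

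Putting (ii) and (iii) together gives $0 \ge (\text{something} \le 0) = (\text{something} \ge 0)$, forcing the gradient terms to vanish and $|A|=0$, i.e. $|Q_L|\le M+1$ a.e. The main obstacle is step (i): one must be careful that the cut-off threshold in the test function lies strictly inside the flat region of $\eta$, so that $\widetilde V$ and \emph{all} its first-order derivatives genuinely vanish on $A$; this is handled by fixing the test-function level to $M$ (the value below which $\eta\equiv 1$ is irrelevant — what matters is choosing it in $[M+1,\infty)$ where $\eta\equiv 0$), or equivalently by replacing $M+1$ throughout by any value $\ge M+1$. A secondary technical point is justifying the integration by parts and the use of $\varphi$ as an admissible test function in $W^{1,2}_{Q_0}$; this is standard once one truncates $\varphi$ and passes to the limit, using $Q_L\in W^{1,2}$ and the local boundedness of minimizers from partial regularity, but strictly speaking the cleanest route is to prove the bound first (it does not require regularity) via the Stampacchia truncation method exactly as above.
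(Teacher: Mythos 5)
Your approach matches the paper's: test \eqref{MDEL} against a traceless test function supported on $\{|Q_L|>M+1\}$, where the cutoff $\eta$ annihilates all $\tilde V$-contributions, integrate by parts so the elastic part has a definite sign, and use that the bulk bracket $c|Q|^4 - b\,\tr Q^3 - a|Q|^2$ is strictly positive for $|Q|$ large to force $\{|Q_L|>M+1\}$ to have measure zero. The one place you could tighten is the admissibility of the test function. The paper takes $\phi = Q_L\bigl(1-\min\{1,(M+1)/|Q_L|\}\bigr)$, whose scalar factor lies in $[0,1]$ and whose gradient is controlled by $|\nabla Q_L|$ on the support (where $1/|Q_L|\le 1/(M+1)$), so $\phi\in W^{1,2}_0(\Omega;S_0)$ is immediate and no further truncation is needed. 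Your choice $\varphi=(|Q_L|^2-(M+1)^2)_+\,Q_L$ is cubic in $Q_L$, and before the $L^\infty$ bound is established one only knows $Q_L\in W^{1,2}\hookrightarrow L^6$ in 3D, which does not place $\nabla\varphi$ in $L^2$; so the additional truncation-and-limit step you flag is genuinely necessary, while the paper's bounded factor sidesteps the issue entirely. (Also, your alternative appeal to local boundedness from partial regularity to justify admissibility would be circular here: in this paper the partial regularity for $Q_L$ is derived \emph{from} the $L^\infty$ bound of this lemma, not the other way around.)
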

\begin{proof} Recall from the definition of $\tilde f_E$ in (\ref{MD}) that for a $Q\in S_0$ with $|Q|\geq M+1$,
\[\tilde f_E(Q, \nabla Q)=\frac {\tilde \alpha} 2|\nabla Q|^2.\]
Similarly to one in \cite {CHH}, choose  a test function $\phi =Q (1-\min\{1, \frac {M+1}{|Q|}\})$. Multiplying (\ref{MDEL})   by the test function $\phi$, we have
\begin{align*}
	&\tilde\alpha  \int_{|Q|\geq M+1}|\nabla Q|^2 (1- \frac {M+1}{|Q|})-(M+1) Q_{ij} \na_k Q_{ij} \nabla_k  \frac 1{|Q|})\,dx
 \\& + \frac 1 L \int_{|Q|\geq M+1} \(-a|Q|^2- b  Q_{ik}Q_{kj} Q_{ij} +c|Q|^4\)
 (1- \frac {M+1}{|Q|}) \,dx=0.
	\end{align*}
Note the fact that $  \nabla_k |Q|^2=2Q_{ij} \na_k Q_{ij} $. The above second term is nonnegative.
For a sufficiently large $M>0$, third term also is positive. This implies that the set $\{|Q|\geq M+1\}$ is empty; i.e.
$|Q|\leq M+1$ a.e. in $\Omega$.
\end{proof}

\begin{lemma}
\label{lem Legendre-Hadamard condition}
For any $Q_*\in S_*$, the Hessian of the bulk density $\tilde f_B(Q_*)$ is positive definite for a uniform constant;  i.e.
for any $\xi \in S_0$, we have
\begin{align*}\alabel{eq positive definite}
\p_{Q_{ij}}\p_{Q_{kl}} f_B(Q_*)\xi_{ij}\xi_{kl}\geq \lambda |\xi|^2,
\end{align*}
where $\lambda =\min\{a,s_+ b \}>0$.
\end{lemma}
\begin{proof}
Recall the fact that the bulk density $f_B$ is rotational invariant. For any tensor $Q \in S_*$, there exists a rotation $R=R(Q)\in SO(3)$ such that we can rotate $Q$ to its diagonal form $\tilde Q$ with elements $(\frac{-s_+}{3},\frac{-s_+}{3},\frac{2s_+}{3})$ and
\[\tilde Q_{ij}=R_{ip}Q_{pq}R_{jq}.\]
Using the chain rule, we derive
\begin{align*}
&\p_{Q_{mn}}\p_{Q_{kl}}   f_B(Q)\xi_{mn}\xi_{kl}=\p_{Q_{mn}}\(\frac{\p f_B(\tilde Q)}{\p \tilde Q_{ij}}\frac{\p\tilde Q_{ij}}{\p Q_{kl}}\)\xi_{mn}\xi_{kl}
\\
=&\frac{\p^2 f_B(\tilde Q)}{\p \tilde Q_{ij}\p \tilde Q_{\tilde i\tilde j}}\frac{\p \tilde Q_{ij}}{\p Q_{kl}}\frac{\p \tilde Q_{\tilde i\tilde j}}{\p Q_{mn}}\xi_{mn}\xi_{kl}
\\
=&  \frac{\p^2 f_B(\tilde Q)}{\p \tilde Q_{ij}\p \tilde Q_{\tilde i\tilde j}}\frac{\p (R_{ip}Q_{pq}R_{jq})}{\p Q_{kl}}\frac{\p(R_{\tilde i\tilde p}Q_{\tilde p\tilde q}R_{\tilde j\tilde q})}{\p Q_{mn}}\xi_{mn}\xi_{kl}
\\
=& \frac{\p^2 f_B(\tilde Q)}{\p \tilde Q_{ij}\p \tilde Q_{\tilde i\tilde j}}R_{ik} R_{jl}R_{ \tilde im} R_{\tilde jn}\xi_{mn}\xi_{kl}=\frac{\p^2 f_B} {\p \tilde Q_{ij}\p \tilde Q_{\tilde i\tilde j}}(\tilde Q)\tilde \xi_{ij} \tilde \xi_{\tilde i\tilde j},
\end{align*}
where $\tilde \xi_{ij}=R_{ik} \xi_{kl}R_{jl}$ and $\tilde \xi_{\tilde i\tilde j}=R_{ \tilde im} \xi_{mn}R_{\tilde jn}$.

We calculate  the first derivative of $f_B(\tilde Q)$
\begin{align*}
\p_{\tilde Q_{ij}}f_B(\tilde Q) =\(-a \tilde Q_{ij}-b\sum_k\tilde Q_{jk}\tilde Q_{ki}+c\tilde Q_{ij}|\tilde Q|^2\).
\end{align*}
Then the second derivative of $f_B(\tilde Q)$ is
\begin{align*}
& \p_{\tilde Q_{\tilde i\tilde j}}\p_{\tilde Q_{ij}}f_B(\tilde Q)= -a\delta_{i\tilde i}\delta_{j\tilde j}
-b(\delta_{\tilde ij}\tilde Q_{\tilde j i}+\delta_{\tilde ji}\tilde Q_{j\tilde i})+c(\delta_{i\tilde i}\delta_{j\tilde j}|\tilde Q|^2+2\tilde Q_{ij}\tilde Q_{\tilde i\tilde j}) .
\end{align*}
For the case of $i=j=\tilde i=\tilde j$, using the equality $\frac 23 cs^2_+=\frac 13bs_++a $ (c.f. \cite{MZ}), we find
\begin{align*}
\p_{\tilde Q_{ii}}\p_{\tilde Q_{ii}}f_B(\tilde Q) =&-a -2\tilde Q_{ii}b+( |\tilde Q|^2+2\tilde Q_{ii}^2)c=-(2\tilde Q_{ii}-\frac{s_+}{3})b+2\tilde Q_{ii}^2c.
\end{align*}
Then, at $\tilde Q=\tilde Q_*$,  we have
\begin{align}
\p_{\tilde Q_{11}}\p_{\tilde Q_{11}}f_B(\tilde Q)  =&\(s_+b+\frac{2s_+^2}{9}c\)=\frac 13a+\frac{10s_+}{9}b,
\\
\p_{\tilde Q_{22}}\p_{\tilde Q_{22}}f_B(\tilde Q) =&\frac 13a+\frac{10s_+}{9}b,
\\
\p_{\tilde Q_{33}}\p_{\tilde Q_{33}}f_B(\tilde Q)=&-s_+ b+\frac{8s_+}{9}c=\frac43a-\frac{5s_+}{9}b.
\end{align}
For the case of $i=j\neq\tilde i=\tilde j$,  three terms at $\tilde Q=\tilde Q_*$ are
\begin{align}
2\p_{\tilde Q_{11}}\p_{\tilde Q_{22}}f_B(\tilde Q) =&4\tilde Q_{11}\tilde Q_{22}c=\frac{4s_+^2}{9}c=\frac23a+\frac{2s_+}{9}b,
\\
2\p_{\tilde Q_{11}}\p_{\tilde Q_{33}}f_B(\tilde Q) =&4\tilde Q_{11}\tilde Q_{33}c=-\frac{8s_+^2}{9}c=-\(\frac43a+\frac{4s_+}{9}b\),
\\
2\p_{\tilde Q_{22}}\p_{\tilde Q_{33}}f_B(\tilde Q) =&4\tilde Q_{22}\tilde Q_{33}c=-\frac{8s_+^2}{9}c=-\(\frac43a+\frac{4s_+}{9}b\).
\end{align}
For the remaining case,  that is $i\neq j$ or $ \tilde i\neq \tilde j$, we have  at $\tilde Q=\tilde Q_*$
\begin{align*}\alabel{Second derivative case 3}
&\(\sum_{i\neq j}\sum_{\tilde i,\tilde j}+\sum_ {\tilde i\neq \tilde j}\sum_{i,j}\)\p_{\tilde Q_{\tilde i\tilde j}}\p_{\tilde Q_{ij}}f_B(\tilde Q)\tilde \xi_{\tilde i\tilde j}\tilde \xi_{ij}
\\=& \(\sum_{i\neq j}\sum_{\tilde i,\tilde j}+\sum_ {\tilde i\neq \tilde j}\sum_{i,j}\)\(c(\delta_{i\tilde i}\delta_{j\tilde j}|\tilde Q|^2+2\tilde Q_{ij}\tilde Q_{\tilde i\tilde j})-b(\delta_{\tilde ij}\tilde Q_{\tilde j i}+\delta_{\tilde ji}\tilde Q_{j\tilde i})\)\tilde \xi_{\tilde i\tilde j}\tilde \xi_{ij}
\\
=&\sum_{i\neq j}\(\frac{2s_+^2}{3}c-(\tilde Q_{ii}+\tilde Q_{jj})b\)|\tilde \xi_{ij}|^2\geq\sum_{i\neq j}a|\tilde \xi_{ij}|^2.
\end{align*}
In conclusion, using the fact that $\tilde \xi_{33}=-(\tilde\xi_{11}+\tilde\xi_{22})$ we have at $\tilde Q=\tilde Q_*$
\begin{align*}
& \p_{Q_{mn}}\p_{Q_{kl}}   f_B(Q)\xi_{mn}\xi_{kl}=\p_{\tilde Q_{\tilde i\tilde j}}\p_{\tilde Q_{ij}}f_B(\tilde Q)\tilde \xi_{\tilde i\tilde j}\tilde \xi_{ij}
\\
\geq&\(\frac 13a+\frac{10s_+}{9}b\)(\tilde \xi_{11}^2+\tilde \xi_{22}^2)+\(\frac23a+\frac{2s_+}{9}b\)\tilde \xi_{11}\tilde \xi_{22}
\\
&+\(\frac43a-\frac{5s_+}{9}b \)\tilde \xi_{33}^2-\(\frac43a+\frac{4s_+}{9}b\)\tilde \xi_{33}(\tilde \xi_{11}+\tilde \xi_{22})+\sum_{i\neq j}a| \tilde \xi_{ij}|^2
\\
=&  bs_+(\tilde \xi_{11}^2+\tilde \xi_{22}^2) + 3a\tilde \xi_{33}^2 +\sum_{i\neq j}a|\tilde \xi_{ij}|^2\geq \lambda|\xi|^2
\end{align*}
with $\lambda =\min\{ a,s_+b\}>0$.
\end{proof}

Now we give a proof of Theorem \ref{Theorem 3}.
\begin{proof}For each $L>0$, let $Q_L$ be a weak solution to the equation (\ref{MDEL}) with boundary value $Q_0\in W^{1,2}(\Omega, S_*)$.
		By Lemma \ref{lem uniform bound}, $Q_L$ is uniformly bounded in $\Omega$.

For a small $\delta >0$, let $S_{\delta}$ be a neighborhood of $S_*$ defined by
\[S_{\delta}:=\{Q\in S_0:\quad \mbox{dist} (Q, S_*)\leq \delta\}.\]
There is a smooth projection $\pi$ from $S_{2\delta}$ to   $S_*$ (see \cite{CS}). For each $\delta>0$, define a set
		\[\Sigma_{\delta}=S_0\backslash S_{\delta}=\{Q\in S_0: \mbox{dist}(Q, S_*)\geq \delta\}.\]
For each $Q\in \Sigma_{\delta}$, we have $\pi (Q)\in S_*$;
i.e. $ \pi (Q) =s_+\(u\otimes u-\frac 1 3 I\)$ with $u\in S^2$.

For a   test function $\phi\in C^\infty_0(\Omega; \R^3)$ and  a small $t\in\R$, set  $u_{t}:= \frac{u+t\phi}{|u+t\phi|}$.
  Then we define
	\begin{align}\label{eq projection variation}
	 \pi (Q)_t :=  \ s_+\(u_{t}\otimes u_{t}-\frac 1 3 I\)\in S_*.
		\end{align}

By the Taylor expansion for $\tilde f_B(\pi (Q_L)_t)$ at $Q_L\in S_{\delta}$,  we derive
		\begin{align*}\alabel{eq Taylor}
				\frac{\tilde f_B(\pi (Q_L)_t)}{L} = & \frac{\tilde f_B(Q_L)}{L}+ \frac1L\na_{Q_{ij}} f_B(Q_L)(\pi (Q_L)_t -Q_L)_{ij}
				\\
				+ &\frac1{2L}\na^2_{Q_{ij}Q_{kl}}  f_B\(Q_{\tau}\)(\pi (Q_L)_t-Q_L)_{ij}( \pi (Q_L)_t-Q_L)_{kl},
		\end{align*}
where $Q_\tau:=(1-\tau) \pi (Q_L)_t+\tau Q_L$ for some $\tau\in[0,1]$.

Since  $\pi (Q_L)_t\in S_*$, it implies that $\tilde f_B(\pi (Q_L)_t)=0$.  Noting that the function $\tilde f_B(Q)$ is smooth in $Q$, for any  $\varepsilon_1 >0$, there is a $\delta_1>0$ such  that for any two $Q_1,  Q_2$ bounded by $M+1$ with $|Q_1-Q_2|\leq \delta_1$, we have
\[|\na^2_{Q_{ij}Q_{kl}} f_B\(Q_1\)-\na^2_{Q_{ij}Q_{kl}} f_B\( Q_2 \)|\leq \varepsilon_1.\]
For sufficiently small $L$ and $t$  such that $|Q_{L,t}-Q_*|<\frac{1-\tau}{2}\delta_1$ and $\delta=\frac{\tau}{2}\delta_1$,  we have $|Q_\tau-Q_*|<\delta_1$. By
choosing $\varepsilon_1$ sufficiently small and applying Lemma \ref{lem Legendre-Hadamard condition}, we obtain
\[\na^2_{Q_{ij}Q_{kl}} f_B\(Q_\tau\)( \pi (Q_L)_t-Q_L)_{ij}( \pi (Q_L)_t-Q_L)_{kl} \geq\frac{\lambda}{2}| \pi (Q_L)_t-Q_L|^2.\]
For each $L$, we define a subdomain by
\[\Omega_{L,\delta} =\{x\in \Omega : \, Q_L(x)\in S_{\delta}\}.\]
For a sufficiently small $\delta$ and $t$, we have
\begin{align*}\alabel{eq 2nd order fb}
	&\int_{\Omega_{L,2\delta}}\frac 1L\na^2_{Q_{ij}Q_{kl}}  f_B\(Q_{\tau}\)(\pi (Q_L)_t -Q_L)_{ij}(\pi (Q_L)_t -Q_L)_{kl} \d x \\
\geq&\frac 1L\int_{\Omega_{L,2\delta}}\frac{\lambda}{2}| \pi (Q_L)_t-Q_L|^2\d x.
\end{align*}
Then it follows from \eqref{eq Taylor} that
\begin{align*}\alabel{eq f_B out of Sigma_L}
 \int_{\Omega_{L,2\delta}} \frac1L\na_{Q_{ij}} f_B(Q_L)(\pi (Q_L)_t -Q_L)_{ij} \d x\leq 0.
\end{align*}
In order to  extend \eqref{eq f_B out of Sigma_L} to $\Omega$, we  define
\begin{align*}\alabel{eq extension}
\hat  Q_{L,t}:=\begin{cases}
\pi(Q_L)_t,&\mbox{ for }Q_L\in S_{\delta}
\\
\frac{|Q_L-\pi(Q_L)|^2}{\delta^2}\pi(Q_L)_t+\frac{\delta^2-|Q_L-\pi(Q_L)|^2}{\delta^2} Q_{*,t},&\mbox{ for }Q_L\in \Sigma_{\delta}\backslash \Sigma_{2\delta}
\\
Q_{*,t},&\mbox{ for } Q_L\in \Sigma_{2\delta}.
\end{cases}
\end{align*}
It can be checked that $\hat  Q_{L,t}\in W^{1,2}_{Q_0}(\Omega ; S_0)$. Then
\begin{align*}\alabel{TB}
\hat  Q_{L,t}-Q_{*,t}=\begin{cases}
\pi(Q_L)_t- Q_{*,t},&\mbox{ for }Q_L\in S_{\delta}
\\
\frac{|Q_L-\pi(Q_L)|^2}{\delta^2} (\pi(Q_L)_t- Q_{*,t}),&\mbox{ for }  Q_L\in \Sigma_{\delta}\backslash \Sigma_{2\delta}
\\
0,&\mbox{ for } Q_L\in \Sigma_{2\delta}.
\end{cases}
\end{align*}

On the other hand, there is a uniform bound for $\tilde f_B(Q_L(x)) \geq C(\delta)>0,\forall x\in \Omega \backslash \Omega_{L,\delta}$. Using Lemma \ref{lem uniform bound} we observe that
\begin{align*}\alabel{eq f_B in Sigma_L}
&\int_{\Omega \backslash \Omega_{L,\delta} }\frac1L\na_{Q_{ij}} f_B(Q_L)(\hat Q_{L,t}-Q_L)_{ij}\d x
\\
=&\int_{\Omega_{L,2\delta} \backslash \Omega_{L,\delta} }\frac1L\na_{Q_{ij}} f_B(Q_L)\left [\frac{|Q_L-\pi(Q_L)|^2}{\delta^2}(\pi(Q_L)_t-Q_{*,t})+(Q_{*,t}-Q_L)\right]_{ij}\d x
\\
&+\int_{\Omega \backslash \Omega_{L,2\delta} }\frac1L\na_{Q_{ij}} f_B(Q_L)(Q_{*,t}-Q_L)_{ij}\d x \\
\leq& C \frac{| \Omega \backslash \Omega_{L,\delta}|}{L}  \leq  \frac C {C(\delta)}\int_{\Omega \backslash \Omega_{L,\delta} }\frac{\tilde f_B(Q_L) }L \d x.
\end{align*}
By the assumption in Theorem 3, we have
\begin{align*}\alabel{eq na f_B}
\lim_{L\to 0}\int_{\Omega}\frac1L\na_{Q_{ij}} f_B(Q_L)(\hat Q_{L,t}-Q_L)_{ij}\d x\leq 0.
\end{align*}
Multiplying (\ref{MDEL}) by $(\hat Q_{L,t}-Q_L)$, integrating by parts and using \eqref{eq na f_B} yield
\begin{align*}\alabel{eq sub of na f_B}
&\lim_{L\to0}\int_{\Omega }\(\alpha \na_kQ_{L,ij} +\tilde V_{p^k_{ij}}(Q_L,\na Q_L)-\tilde V_{Q_{ij}}(Q_L,\na Q_L)\)\na_k (\hat Q_{L,t}-Q_L)_{ij}\d x\geq 0.
\end{align*}
Here we used the fact that $Q_{L,t}-Q_L$ is symmetric and traceless.

In order to pass a limit, we claim that $\hat  Q_{L,t}\to Q_{*,t}$ strongly in $W^{1,2}_{Q_0}(\Omega ; S_0)$.

\noindent In fact,  it follows from \eqref{TB} that
\begin{align*}\alabel{Q hat convergence}
&\int_{\Omega} |\na  (\hat  Q_{L,t}-Q_{*,t})|^2 \,dx= \int_{ \Omega_{L,2\delta}}  |\na  (\hat  Q_{L,t}-Q_{*,t})|^2 \,dx
\\
= & \int_{\Omega_{L,\delta}} |\na  ( \hat Q_{L,t}-Q_{*,t})|^2  \,dx
+\int_{ \Omega_{L,2\delta} \backslash  \Omega_{L,\delta} } \left |\na \(  \frac{|Q_L-\pi(Q_L)|^2}{\delta^2} (\pi(Q_L)_t- Q_{*,t})\)\right |^2  \,dx
\\
\leq&  \int_{\Omega_{L,\delta}}  |\na  ( \pi (Q_L)_t-\pi (Q_{*})_t)|^2  \,dx +
 C \int_{ \Omega_{L,2\delta} \backslash  \Omega_{L,\delta} }|\na  ( \pi (Q_L)_t-\pi (Q_{*})_t)|^2  \,dx
\\
&+C \int_{ \Omega_{L,2\delta} \backslash  \Omega_{L,\delta} }\frac {|\pi (Q_L)_t-Q_{*,t}|^2}{\delta^4}\( |\nabla ( Q_L-Q_*)|^2+ |\nabla ( \pi (Q_*)-\pi (Q_L))|^2\)\,dx.
\end{align*}
Note that
\begin{align*}
& \pi (Q_L)-\pi (Q_{*})=\na_{Q}\pi (Q_{\xi}) (Q_L-Q_{*}),\\
&\pi (Q_L)_t-\pi (Q_{*})_t=\na_{Q}\pi(Q_{\xi})_t (Q_L-Q_{*}).
\end{align*}
When $Q_L$ approaches to $Q_*$, $\na_{Q}\pi (Q_{\xi})$ is close to the identity map $I$ and $\na_{Q}\pi(Q_{\xi})_t$ for small $t$.  Therefore
\begin{align*}
& |\na (\pi (Q_L) -\pi (Q_{*}))  |\leq C|\nabla (Q_L-Q_{*})| +C |\nabla Q_{\xi}| |Q_L-Q_{*}|.
\end{align*}
As $Q_L\to Q_*$, the term $\pi(Q_L)_t$ is close to $\pi(Q_*)_t$ and $\na_Q\pi(Q_\xi)_t$ is close to the identity map for small $t$. Note that $\na^2_{QQ}\pi(Q_\xi)_t$ is bounded. Then
\begin{align*}
|\na  (\pi (Q_L)_t-\pi (Q_{*})_t)|&\leq |\na_Q\pi(Q_\xi)_t\na  (Q_L -Q_{*})|+|\na^2_{QQ}\pi(Q_\xi)_t||\na Q_\xi||Q_L -Q_{*}|
\\
&  \leq C|\nabla (Q_L-Q_{*})| + C|\nabla Q_{\xi}| |Q_L-Q_{*}|.
\end{align*}
Then the inequality \eqref{Q hat convergence} reads as
\begin{align*}
&\int_{\Omega} |\na  (\hat  Q_{L,t}-Q_{*,t})|^2 \,dx
\\
\leq& C\int_{ \Omega_{L,2\delta} }|\nabla (Q_L-Q_{*})|^2   +(|\nabla Q_L|^2 +|\nabla Q_*|^2)|Q_L-Q_{*}|^2\,dx
\\
\leq& C\int_{ \Omega }|\nabla (Q_L-Q_{*})|^2\d x  +C\(\int_{ \Omega\backslash \Sigma_\varepsilon}+\int_{\Sigma_\varepsilon }\)|\nabla Q_*|^2|Q_L-Q_{*}|^2\,dx.
\end{align*}
Here we employ Egoroff's theorem; i.e. for all $\varepsilon>0$, there exists a measurable subset $\Sigma_{\varepsilon} \subset \Omega$ such that
\begin{equation}\label{Egoroff}
| \Sigma_\varepsilon|\leq \varepsilon \mbox{ and }Q_L\to Q_*  \mbox{ uniformly on }\Omega\backslash\Sigma_{\varepsilon}.
\end{equation}As $\varepsilon\to 0$ and $L\to 0$, we prove the claim that $\hat  Q_{L,t}\to Q_{*,t}$ strongly in $W^{1,2}_{Q_0}(\Omega; S_0)$.

 We observe that
\begin{align*}
& \int_{\Omega }|\tilde V_{p^k_{ij}}(Q_L,\na Q_L)\na_k(\hat Q_{L,t}-Q_L)_{ij}-\tilde V_{p^k_{ij}}(Q_*,\na Q_*)\na_k(Q_{*,t}-Q_*)_{ij}|\d x
\\
&\leq \int_{\Omega}| \tilde V_{p^k_{ij}}(Q_L,\na Q_L)||(\na_kQ_{L,t} -\na_kQ_{*,t})_{ij}+(\na_kQ_*-\na_kQ_L)_{ij}|\d x
\\
&+\(\int_{ \Omega\backslash \Sigma_\varepsilon}+\int_{\Sigma_\varepsilon }\) |\tilde V_{p^k_{ij}}(Q_L,\na Q_L)\na_k(Q_{*,t}-Q_*)_{ij}-\tilde V_{p^k_{ij}}(Q_*,\na Q_*)\na_k(Q_{*,t}-Q_*)_{ij}|\d x
\end{align*}
and
\begin{align*}
& \int_{\Omega }|\tilde V_{Q_{ij}}(Q_L,\na Q_L) (\hat Q_{L,t}-Q_L)_{ij} -\tilde V_{Q_{ij}}(Q_*,\na Q_*) (Q_{*,t}-Q_*)_{ij}|\d x
\\
\leq &\(\int_{ \Omega\backslash \Sigma_\varepsilon}+\int_{\Sigma_\varepsilon }\) |\tilde V_{Q_{ij}}(Q_L,\na Q_L) (\hat Q_{L,t}-Q_L)_{ij} -\tilde V_{Q_{ij}}(Q_*,\na Q_L) (\hat Q_{L,t}-Q_L)_{ij}|\d x
\\
&+\int_{\Omega } |\tilde V_{Q_{ij}}(Q_*,\na Q_L) (\hat Q_{L,t}-Q_L)_{ij}-\tilde V_{Q_{ij}}(Q_*,\na Q_*) (\hat Q_{*,t}-Q_*)_{ij}|\d x.
\end{align*}
Using the uniform convergence of $Q_L$ in $\Omega\backslash\Sigma_\varepsilon$ and strong convergence of $\hat Q_{L,t}, Q_L$ in $W^{1,2}_{Q_0}(\Omega,S_0)$,   we derive
\begin{align*}
&\lim_{L\to0}\int_{\Omega }|\tilde V_{Q_{ij}}(Q_L,\na Q_L) (\hat Q_{L,t}-Q_L)_{ij} -\tilde V_{Q_{ij}}(Q_*,\na Q_*) (Q_{*,t}-Q_*)_{ij}|\d x=0,
\\
 &\lim_{L\to0}\int_{\Omega }| \tilde V_{p^k_{ij}}(Q_L,\na Q_L)\na_k(\hat Q_{L,t}-Q_L)_{ij} -\tilde V_{p^k_{ij}}(Q_*,\na Q_*)\na_k(Q_{*,t}-Q_*)_{ij}|\d x=0.
 \end{align*}
As $L\to 0$, the estimate \eqref{eq sub of na f_B} yields
\begin{align*}\alabel{eq L varepsilon to 0}
&\int_{\Omega }\( \alpha \na_kQ_{*,ij} +\tilde V_{p^k_{ij}}(Q_*,\na Q_*)\)\na_k(Q_{*,t}-Q_*)_{ij}\d x
\\
&+\int_{\Omega }\tilde V_{ij}(Q_*,\na Q_*) (Q_{*,t}-Q_*)_{ij}\d x\geq 0.
\end{align*}

For each $\eta \in C_0^{\infty}(\Omega,S_0)$, we define
\begin{align*}
\varphi_{ij} (Q,\eta):=&
(s_+^{-1}Q_{jl}+\frac 13 \delta_{jl})\eta_{il}+(s_+^{-1}Q_{il}+\frac 13 \delta_{il})\eta_{jl}\alabel{test function 1}
\\
&-2(s_+^{-1}Q_{ij}+\frac 13 \delta_{ij})(s_+^{-1}Q_{lm}+\frac 13 \delta_{lm})\eta_{lm}.
\end{align*}
For the estimate \eqref{eq L varepsilon to 0}, the limit in $t$ exists then using \eqref{V1} and \eqref{V2} that we have
\begin{align*}
\lim_{t\to0}\frac{(Q_t-Q_*)}{t}=\varphi(Q_*,\eta),\quad \lim_{t\to0}\na \frac{(Q_t-Q_*)}{t}=\na\varphi(Q_*,\eta).
\end{align*}
Dividing \eqref{eq L varepsilon to 0} by $t$ then as $t\to 0^+$ and $t\to 0^-$, we have
\begin{align*}
	\int_{\Omega  } \(\alpha \na_kQ_{*,ij}+V_{p^k_{ij}}(Q_*,\na Q_*) \)\na_k  \varphi_{ij}(Q_*,\eta)+V_{Q_{ij}} (Q_*,\na Q_*)\varphi_{ij}(Q_*,\eta)\d x= 0.
\end{align*}
Repeating same steps in \eqref{EL for one constant on S_*} and \eqref{EL for V on S_*}, we prove that $Q_*$ satisfies  \eqref{EL}.
\end{proof}

\medskip
\noindent{\bf Acknowledgements:} We would like to thank
Professor John Ball  for his interest and valuable comments. In particular, his beautiful talk on the Landau-de Gennes theory at the University of Queensland in January 2018  has inspired us to work at this problem.  We also wish to thank Professor Arghir Zarnescu for his valuable comments.  Part of the research was supported by the Australian Research Council
grant DP150101275.

\end{document}